\newcommand{\red}[1]{{\leavevmode\color{red}#1}}
\newcommand{\brown}[1]{{\leavevmode\color{Brown}#1}}
\newcommand{\magenta}[1]{{\leavevmode\color{Magenta}#1}}
\newtheorem{theorem}{Theorem}[section]
\newtheorem{lemma}[theorem]{Lemma}
\newtheorem{definition}[theorem]{Definition}
\newtheorem{corollary}[theorem]{Corollary}
\theoremstyle{remark}
\newtheorem{Remark}[theorem]{\bf Remark}
\newtheorem*{Example}{\bf Example}
\newtheorem*{Claim}{\it Claim}
\newtheorem*{claim}{Claim}
\setlist[enumerate]{parsep=0pt, itemsep=0pt, topsep=0pt}
\setlist[itemize]{parsep=0pt, itemsep=0pt, topsep=0pt}
\numberwithin{equation}{section}
\def\eps{\varepsilon}
\def\la{\lambda}
\def\a{\alpha}
\def\be{\beta}
\def\k{\kappa}
\def\ga{\gamma}
\def\part{\partial}
\def\Cal{\mathcal}
\newcommand{\pr}{\operatorname{\mathbb P}}
\newcommand{\mean}{\operatorname{\mathbb{E}}}
\newcommand{\End}{\operatorname{END}}
\renewcommand{\cal}{\mathcal}
\newcommand{\floor}[1]{\lfloor #1 \rfloor}
\newcommand{\bs}{\boldsymbol}
\newcommand{\lp}{\left(}
\newcommand{\rp}{\right)}
\newcommand{\brac}[1]{\left(#1\right)}
\newcommand{\bfrac}[2]{\brac{\frac{#1}{#2}}}
\newcommand{\beq}{\begin{equation}}
\newcommand{\eeq}{\end{equation}}
\newcommand{\gnk}{G_{n,k}}
\newcommand{\ug}[1][n,k]{G_{#1}}
\title{Perfect matchings and Hamilton cycles in uniform attachment graphs}
\date{}
\author{H\"useyin Acan}
\affil{Department of Mathematics, Drexel University, Philadelphia, PA 19104\\
\texttt{huseyin.acan@drexel.edu}}
\begin{document}
\maketitle

\begin{abstract}
We study Hamilton cycles and perfect matchings in a uniform attachment graph. In this random graph, vertices are added sequentially, and when a vertex $t$ is created, it makes $k$ independent and uniform choices from $\{1,\dots,t-1\}$ and attaches itself to these vertices. Improving the results of Frieze, P\'erez-Gim\'enez, Pra\l{}at and  Reiniger (2019), we show that, with probability approaching 1 as $n$ tends to infinity, a uniform attachment graph on $n$ vertices has a perfect matching for $k \ge 5$ and a Hamilton cycle for $k\ge 13$. One of the ingredients in our proofs is the identification of a subset of vertices that is least likely to expand, which provides us with better expansion rates than the existing ones.

\vspace{.5cm}
\noindent{\bf 2010 AMS Subject Classification:} 05C80, 05C45, 05C70, 60C05.

\noindent{\bf Key words and phrases:} Uniform attachment graph, Hamilton cycle, perfect matching, preferential attachment graph.
\end{abstract}

\section{Introduction}\label{sec:intro}

A \emph{uniform attachment graph} (UAG) is a dynamic random graph where the vertices arrive one at a time. Upon their arrival, each vertex makes $k$ choices -- each one uniform and independent of the others -- from the previous vertices, and attaches itself to these vertices. Naturally this process produces a directed multigraph at every stage, but in this paper we disregard the edge orientations and multiple edges and view it as a simple undirected graph. We study Hamilton cycles and perfect matchings in a UAG. More specifically, we are concerned with identifying those $k$'s (the parameter of the graph) for which the graph is likely/unlikely to contain a Hamilton cycle or a perfect matching.
 
As far as the author is aware, this graph first appeared in a paper by Bollob\'as, Riordan, Spencer, and G. Tusn\'ady~\cite{BRST}, who noted that the graph in question has geometric degree distribution. More recently, it has appeared in several more works~\cite{MJKS,FGPR,AP}. In~\cite{MJKS}, Magner, Janson,  Kollias, and Szpankowski studied the symmetry in a UAG and showed that, for $k=1$, the graph has a nontrivial automorphism with high probability (whp), and for $k=2$, this probability is bounded away from 0. They also conjectured that whp there is no nontrivial automorphism for $k\ge 3$.
Frieze, P\'erez-Gim\'enez, Pra\l{}at and  Reiniger~\cite{FGPR} studied the Hamilton cycles and perfect matchings in a UAG, and in this paper we improve their results. Most recently, bootstrap percolation on uniform attachment graphs is studied in~\cite{AP}.

In the studies~\cite{BRST,MJKS,FGPR}, uniform attachment graphs appear together with preferential attachment graphs introduced by Barab\'asi and Albert~\cite{BA} and made rigorous by Bollob\'as and Riordan~\cite{BR}. 
In fact, the uniform attachment graph is the limiting distribution of a generalized Barab\'asi-Albert graph model as the additional parameter $\delta$ tends to infinity. (See van der Hofstad~\cite[Chapter 8]{Hofstad} for a description of this generalized model.)

Thresholds for Hamilton cycles and perfect matchings have been studied widely for various random graph models, starting with a paper of Erd\H{os} and R\'enyi~\cite{ER60}, where they posed the question of finding the threshold for the model  $G_{n,m}$. The same authors~\cite{ER66} later proved that $G_{n,m}$ has a perfect matching whp when $m=0.5n\log n+\omega n$ for any $\omega\to \infty$. 
As noted by the authors, this is also the number of necessary edges for the disappearance of the last isolated vertex. Later improvements were given by Bollob\'as and Thomason~\cite{BT} as well as Bollob\'as and Frieze~\cite{BF}. In the case of Hamilton cycles, improving the results of P\'osa~\cite{Posa} and Kor\v{s}unov~\cite{Korsunov}, a detailed picture for Hamilton cycles is given by Koml\'os and Szemer\'edi~\cite{KS}. Bollob\'as~\cite{Bollobas} and  Ajtai, J. Koml\'os and E. Szemer\'edi~\cite{AKS} proved that whp the graph becomes Hamiltonian as soon as the minimum degree becomes 2.

As the main obstacle for the existence of a perfect matching (resp.\ Hamilton cycle) is the presence of degree-0 (resp.\ degree at most 1) vertices in $G_{n,m}$ (and a closely related model $G_{n,p}$), it is natural to consider random graphs with a given minimum degree. In that case, one can hope to find a perfect matching or a Hamilton cycle in a much sparser graph, maybe with a constant edge density. In fact, this has been the case for a variety of graph models.

For a $d$-regular random graph, Bollob\'as~\cite{Bollobas83} and Fenner and Frieze~\cite{FF84} showed that the graph is Hamiltonian for sufficiently large constant $d$. Later, Robinson and Wormald~\cite{RW} proved that it is Hamiltonian for every fixed $d\ge 3$. This is the best one can expect since a 2-regular graph is a union of cycles and whp a 2-regular graph has more than one cycle. In the case of perfect matchings, when the order is even, a random $d$-regular graph has a perfect matching whp if $d=1$ or $d\ge 3$ and does not have a perfect matching whp if $d=2$, see Bollob\'as~\cite[Corollary 7.33]{Bollobas}.

A model similar to the uniform attachment model is the $k$-out model, where the vertex set is $[n]$ and each vertex $t\in [n]$ makes $k$ independent and uniform choices from $[n]$ (as opposed to the vertices in $[t-]$).
For $k$-out random graphs, after a sequence of improvements in~\cite{FF83,Frieze88,FL90,CF00}, Bohman and Frieze~\cite{BohmanFrieze} proved that $3$-out is Hamiltonian, which is the best possible. For the perfect matching problem, Frieze~\cite{Frieze86} showed that $2$-out has a perfect matching whp as long as the order of the graph is even.
For a bibliography on Hamilton cycles in various other graph models, we refer the reader to the survey by Frieze~\cite{Frieze19}.

One of our motivations for this work comes from a paper of Frieze, P\'erez-Gim\'enez, Pra\l{}at and  Reiniger~\cite{FGPR}, where the authors studied the Hamiltonicity and perfect matchings in uniform attachment graphs and preferential attachment graphs. 
They proved that whp a random uniform attachment graph has a perfect matching for $k\ge 159$ and it is Hamiltonian for $k\ge 3214$. For the preferential attachment graph, they showed that the graph has a perfact matching for $k\ge 1253$ and is Hamiltonian for $k\ge 29500$. In this paper, we improve the results for the uniform attachment graph model and show that the graph has a perfect matching for $k\ge 5$ and is Hamiltonian for $k\ge 13$. As another motivation, we are  curious to understand how different this model is from the similarly-defined $k$-out model and the preferential attachment model.

We introduce our notation and give our results in the next section. In Section~\ref{sec:expansion}, we study the expansion of $\gnk$. 
In Sections~\ref{sec:matchings} and \ref{sec:cycles}, we prove our results for perfect matchings and Hamilton cycles, respectively.
Our proofs are based on some of the techniques developed in~~\cite{FGPR}. The improvement of the results follows from the expansion rates established in Section~\ref{sec:expansion} and a slight modification of their algorithms that allows us to start with better initial partial matchings and long paths. We believe the results in Section~\ref{sec:expansion} are interesting in their own right.

\section{Main results}\label{sec:results}
Here we give the main results of the paper following the notation and terminology we will use.

\medskip
\textbf{Notation and terminology.} Most of our notation and terminology is standard but we will note it for completeness.
We say that an event $E_n$, parametrized by $n$, occurs \emph{with high probability} (whp) if $\lim_{n\to \infty}\pr(E_n)=1$.
A matching $M$ \emph{isolates} a vertex $v$ if $v$ is not incident to any of the edges in $M$. To accommodate odd orders, we define a \emph{perfect matching} as a matching that isolates at most one vertex. A {Hamilton cycle} is a cycle that contains all the vertices of the graph.  For two integers $a$ and $b$, we denote by $[a,b]$ the set $\{a,a+1,\dots,b\}$. We simply write $[n]$ for $[1,n]$. We will denote by $\gnk$ the uniform attachment graph with $n$ vertices, where each vertex $j\in [2,n]$ makes $k$ choices uniformly and independently from the vertex set $[j-1]$ and attaches itself to these vertices.

For a graph $G=(V,E)$ and $X\subseteq V$, we denote by $N_G(X)$ the out-neighbors of the vertices in $X$, that is,
\[
N_G(X):=\{u \in V\setminus X\, : \, xu \in E \text{ for some } x\in X\}.
\]
(We will frequently supress the subscript $G$ when the graph is evident from the context.)

For a given graph $G=(V,E)$, in Section~\ref{sec:matchings}, we will say that $X\subseteq V$ \emph{expands} if $|N(X)| \ge |X|$. Similarly,  in Section~\ref{sec:cycles}, we will say that $X$ \emph{expands well} if $|N(X)| \ge 2|X|$.

We are now ready to give our main results.

\begin{theorem}\label{thm:k=4}
Whp, $\ug[n,4]$ has a matching that contains all but a bounded number of vertices.
\end{theorem}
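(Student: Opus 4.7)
The plan is to combine the expansion estimates from Section~\ref{sec:expansion} with a Berge-Tutte/augmenting-path analysis of a maximum matching, in the style of~\cite{FGPR}. The sharper expansion bounds now available for $k=4$ are what allow us to bound the number of unmatched vertices by an absolute constant.

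Let $M^{\ast}$ be a maximum matching of $G_{n,4}$ and $U$ the set of $M^{\ast}$-unmatched vertices. Grow the alternating BFS forest rooted at $U$; let $X$ be $U$ together with the vertices reachable via alternating paths ending with an $M^{\ast}$-edge (the ``even'' side), and $Y$ those reached via a non-matching edge (the ``odd'' side). Since $M^{\ast}$ is maximum, no vertex in $Y$ is unmatched (else we have an augmenting path), and $M^{\ast}$ induces a bijection between $Y$ and $X\setminus U$, so $|Y|=|X|-|U|$. Moreover, every edge leaving $X$ lands in $Y$, giving $|N(X)|\leq|Y|=|X|-|U|$. Thus, whenever $|U|\geq 1$, the set $X$ fails to expand in the sense of Section~\ref{sec:matchings}.

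I would then invoke the expansion result from Section~\ref{sec:expansion}. Based on the abstract's emphasis on identifying the ``least expanding'' subset, I expect this to deliver, whp for $k=4$, the following type of statement: there exist absolute constants $C$ and $\alpha>0$ such that every $X\subseteq[n]$ with $C\leq|X|\leq\alpha n$ satisfies $|N(X)|\geq|X|$. Combined with the previous paragraph, this already forces $|X|<C$ whenever the lemma applies, giving $|U|\leq|X|<C=O(1)$ and the theorem.

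The main obstacle I anticipate is the large-$|X|$ case, where $|X|$ exceeds the range of validity of the expansion lemma. Here the constraints $|X|+|Y|\leq n$ and $|Y|=|X|-|U|$ only give $|X|\leq(n+|U|)/2$, which is not immediately a contradiction when $|U|$ is allowed to grow. One typically rules this out by a secondary expansion argument applied either to a carefully chosen subset of $X$ of size near $\alpha n$, or by exploiting that the remainder $V\setminus(X\cup Y)$ must be perfectly $M^{\ast}$-matched inside itself and therefore has even size $n-2|X|+|U|$, which tightly couples a large $|X|$ to a correspondingly large $|U|$ that one can then contradict via a direct expansion bound on a piece of $X$ of moderate size. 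A secondary subtlety is the handful of small-index vertices of $G_{n,4}$ that do not satisfy the generic expansion estimate with positive probability; these can produce $O(1)$ unavoidable unmatched vertices, which is consistent with the ``bounded number'' in the statement and is precisely why a perfect matching requires $k\geq 5$ rather than $k\geq 4$.
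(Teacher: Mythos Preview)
Your static Berge--Tutte approach is quite different from the paper's, and the large-$|X|$ case you flag is not a side issue but the entire difficulty. In fact your dichotomy collapses to that case: once one knows (whp) that every set of size at most $\alpha_1(4)n\approx 0.172n$ expands, then whenever $|U|\ge 1$ the non-expanding witness you produce (or, more robustly, the Gallai--Edmonds set $D$ of vertices missed by some maximum matching) must have size \emph{exceeding} $\alpha_1(4)n$. This is precisely the content of Lemma~\ref{B and N(B)}: $B(v)$ fails to expand, hence $|B(v)|>\alpha_1(4)n$ for every $v$ missed by some maximum matching, and $B(v)\subseteq X$. So the case $|X|<C$ simply never occurs, and your proposed repairs for the remaining case do not work: a subset $X'\subseteq X$ of size near $\alpha n$ may well expand thanks to neighbours inside $X\setminus X'$, and the parity of $|V\setminus(X\cup Y)|$ does not bound $|U|$. (There is also a non-bipartite wrinkle: the claim $N(X)\subseteq Y$ with $|Y|=|X|-|U|$ tacitly assumes $X\cap Y=\emptyset$, which breaks once a blossom appears; one really needs Gallai--Edmonds rather than a naive alternating BFS.)

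The paper proceeds dynamically instead. It reveals the vertices of $G_{n,4}$ one at a time and tracks $\kappa_t$, the number of vertices missed by a maximum matching at time $t$. Corollary~\ref{a1,a2 expanders} is applied to \emph{every} $G_t$ with $t\ge\sqrt n$ simultaneously (the failure probability $O(t^{-5})$ is summable), so that whp, whenever $\kappa_t>0$ the set $A_t$ of vertices missed by some maximum matching has $|A_t|\ge 0.172\,t$. Vertex $t{+}1$ then hits $A_t$ with probability at least $1-(1-0.172)^4>0.52$, giving $\kappa_{t+1}=\kappa_t-1$; otherwise $\kappa_{t+1}=\kappa_t+1$. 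Thus $(\kappa_t)$ is dominated by a downward-biased $\pm 1$ walk, and a Chernoff bound yields $\kappa_n<\omega$ whp for any $\omega\to\infty$. The sequential exposure is exactly what sidesteps the large-set obstacle: one never needs expansion for sets larger than $\alpha_1(4)t$; one only needs $A_t$ to be large enough to be hit by the next arriving vertex.
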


\begin{theorem}\label{thm:k=5}
Whp, $\ug[n,5]$ has a perfect matching.
\end{theorem}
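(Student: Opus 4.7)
The proof bootstraps Theorem~\ref{thm:k=4}. Couple $G_{n,5}$ with $G_{n,4}$ so that the first four uniform choices of every vertex realize an embedded copy of $G_{n,4}$ and the fifth choice produces one additional independent random edge per vertex (call these \emph{reserve edges}); the reserve edges are independent of the $G_{n,4}$-part by construction. By Theorem~\ref{thm:k=4}, whp this $G_{n,4}$-subgraph has a matching $M_0$ isolating at most a constant number $C$ of vertices, so it suffices to enlarge $M_0$ via at most $\lfloor C/2\rfloor$ successive augmentations inside $G_{n,5}$ to reach a perfect matching.

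Each augmentation is obtained through a P\'osa-style rotation--extension procedure. Given the current matching $M$ and an isolated vertex $u$, let $S$ be the set of endpoints of $M$-alternating paths that start at $u$ with a non-matching edge. Grow $S$ by the rule: if $w\in S$ has a $G_{n,5}$-neighbour $x\notin S$ with $xy\in M$, add $y$ to $S$; if instead $x$ is isolated and $x\ne u$, flip $M$ along the resulting $u$-to-$x$ path, reducing the number of isolated vertices by two. The procedure succeeds unless $S$ closes off, i.e.\ $N_{G_{n,5}}(S\setminus\{u\})\subseteq S$, and this is precisely the negation of the expansion property $|N(X)|\ge |X|$ applied to $X=S\setminus\{u\}$. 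Thus a suitable uniform expansion statement from Section~\ref{sec:expansion} allows every one of the $O(1)$ augmentations to succeed whp.

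The main technical hurdle is the uniform expansion bound: with high probability, $|N(X)|\ge |X|$ must hold simultaneously for every set $X$ in the relevant size range that could appear as a rotation set during any of these augmentations. This is exactly the payoff of Section~\ref{sec:expansion}, whose key insight is the identification of the subsets of $V(G_{n,k})$ that are least likely to expand; the resulting bound is sharp enough to survive a union bound over all candidate rotation sets. A secondary issue is that the first $O(1)$ vertices of the UAG have too few in-neighbours to expand at all, so expansion can be expected only for sets $X$ avoiding this tiny initial block; these exceptional vertices can simply be added to the initial isolated set $U_0$ at the outset and then paired off at the end, drawing directly on their reserve edges rather than through rotation. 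The crucial structural fact enabling everything is the independence of the reserve edges from $M_0$, which is automatic since $M_0$ is measurable with respect to the first four choices alone, while the fifth is drawn independently.
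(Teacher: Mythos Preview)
Your argument has a genuine gap. You correctly observe that if the rotation set $S$ ``closes off'' then $S$ fails to expand, and hence, by the results of Section~\ref{sec:expansion}, $S$ must grow past size $\alpha_1(5)n$ before it can get stuck. But this is where your argument stops: once $|S|>\alpha_1(5)n$ the expansion guarantee no longer applies, and nothing prevents $S$ from closing off without ever being adjacent to one of the $O(1)$ remaining $M$-isolated vertices. There is simply no mechanism in your proof forcing a set of size $\approx 0.2n$ to neighbour any particular $O(1)$-element set, so none of your $O(1)$ augmentation rounds is shown to succeed with probability $1-o(1)$. (Relatedly, your invocation of the independence of the reserve edges is idle as written: your procedure grows $S$ using \emph{all} of $G_{n,5}$, so by the time you would want a fifth choice as fresh randomness it has already been revealed and conditioned on.)

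The paper's proof circumvents this by using the reserve edges very differently. It does not try to make each individual augmentation succeed whp; instead it exposes the fifth choice of vertices in $A(G_i)$ --- the set of vertices missed by \emph{some} maximum matching in the current graph --- one at a time, in decreasing order of index. By Lemma~\ref{B and N(B)} combined with the $(\alpha,1)$-expansion of $G_{n,4}$, the target set $B(v_i)$ has size at least $\alpha n$, so each exposure improves the matching with probability bounded below by a positive constant. Running this for up to $\alpha n$ steps and applying a Chernoff bound yields $\Theta(n)$ successes whp, far more than the $O(1)$ needed. This is precisely Lemma~\ref{thm: alpha and gamma}, applied with $k=1$, $\alpha=0.172$, and arbitrarily small $\gamma>0$; since $\zeta=\alpha^2/2>0$, part~(i) gives a perfect matching in $G_{n,5}$ whp.
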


\begin{theorem}\label{thm:k=13}
Whp, $\ug[n,13]$ is Hamiltonian.
\end{theorem}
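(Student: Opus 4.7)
The plan is to adapt the P\'osa-style rotation-extension argument used in~\cite{FGPR}, replacing their expansion estimates with the stronger ones established in Section~\ref{sec:expansion}. First, I would invoke those expansion results to show that in $\ug[n,13]$, whp every vertex subset $X$ of size at most $\alpha n$ (for a suitable constant $\alpha$) expands well, i.e.\ $|N(X)|\ge 2|X|$. The factor $2$ is exactly what is needed for P\'osa rotations to produce a large set of possible endpoints of a longest path, and the improved -- essentially tight -- expansion rates obtained in Section~\ref{sec:expansion}, coming from identifying the subsets that are \emph{least} likely to expand, are what drive the required $k$ down from the $3214$ of~\cite{FGPR} to $13$.

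Next I would construct an initial long path covering all but $o(n)$ vertices of $\ug[n,13]$, mirroring the near-perfect matching step used in Theorems~\ref{thm:k=4} and~\ref{thm:k=5}. The modification relative to~\cite{FGPR} highlighted in the introduction is that starting the rotation-extension procedure from a significantly better initial structure is what saves the remaining units of $k$. Concretely, I would process vertices in reverse order of arrival and greedily grow a system of disjoint paths by matching each new vertex to one of its free back-neighbors; the $k=13$ random out-choices make this succeed on all but at most $o(n)$ vertices whp, and the resulting fragments can be stitched into a single long path via an expansion-based switching argument.

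Third -- and this is where the strengthened expansion enters critically -- I would run P\'osa's rotation-extension on this long path. For a current longest path $P$, one considers the set $E_P$ of vertices that can be realized as an endpoint of a longest path obtained from $P$ by a sequence of rotations with the other endpoint fixed. Good expansion forces $|E_P|$ to grow at each stage until it saturates a set of linear size, after which any additional edge of $\ug[n,13]$ either extends $P$ into a longer path or produces a rotation that absorbs a previously uncovered vertex. Iterating until all vertices lie on a common path, I would finally close the resulting Hamilton path into a cycle via a single edge guaranteed by the good expansion of the endpoint set on both sides.

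The main obstacle I expect is the interaction between the layered arrival structure of $\ug[n,13]$ and the rotation dynamics: vertices arriving very late have only $k$ back-edges and typically few forward-edges, so if such a vertex becomes an endpoint during rotations, the ensuing expansion can be delicate. The refined analysis of Section~\ref{sec:expansion}, tailored to the worst-case subsets, is precisely what handles this, but a careful union bound over possible rotation histories -- tightened compared with~\cite{FGPR} -- will still be needed to verify that the required good expansion is not exhausted in absorbing the $o(n)$ uncovered vertices and to certify that the final closing edge indeed exists rather than being consumed earlier in the algorithm.
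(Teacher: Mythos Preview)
Your sketch diverges from the paper's argument in a way that leaves a genuine gap. The paper does \emph{not} work directly in $\ug[n,13]$; it splits the edge set as $\ug[n,13]=\ug[n,10]\cup \ug[n,1]\cup \ug[n,1]\cup \ug[n,1]$, builds a long path in the first layer, and then spends the three remaining layers of fresh randomness one at a time via Lemma~\ref{thm: alpha and lambda}, with explicit numerics driven by $\alpha_2(10)>0.221$, $\alpha_2(11)>0.235$, $\alpha_2(12)>0.247$. Two concrete points:

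\medskip
\textbf{(1) The initial path.} The paper obtains a path of length $\ge 0.9177\,n$ in $\ug[n,10]$ by revealing vertices in \emph{forward} order and, at each step $t$, checking whether vertex $t{+}1$ hits $\End(P_t,a_t)$; since $G_t$ is a $(0.221,2)$-expander whp, Lemma~\ref{N<2X} gives $|\End(P_t,a_t)|\ge 0.221\,t$, so the success probability per step is at least $1-(1-0.221)^{10}>0.9177$. Your alternative --- process vertices in reverse, greedily attach to a free back-neighbour, then ``stitch fragments via an expansion-based switching argument'' --- is not the paper's method and, as stated, does not yield a path covering all but $o(n)$ vertices. With any fixed $k$ the per-step success probability in this kind of argument is bounded away from $1$, so a linear number of failures is expected; getting down to $o(n)$ missed vertices would require a separate idea you have not supplied.

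\medskip
\textbf{(2) No fresh edges for extension and closure.} You use all $13$ out-choices in the construction of the initial path, and then speak of ``any additional edge of $\ug[n,13]$'' fuelling the rotation--extension and of a ``single edge'' closing the cycle. There are no additional edges at that point: everything has been exposed and conditioned on. The paper avoids this by reserving three independent copies of $\ug[n,1]$; each application of Lemma~\ref{thm: alpha and lambda} exposes one fresh layer, and the probability that the newly revealed edge from the chosen endpoint lands in its (large) $B$-set is exactly what is bounded below using the expander property of the \emph{already-revealed} graph. Without this separation, your ``careful union bound over possible rotation histories'' has nothing independent to act on, and the closing-edge step is unjustified. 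If you want to salvage your outline, you must introduce an explicit $k_1+k_2$ split and redo the numerics; that is precisely what the paper does with $k_1=10$, $k_2=3$.
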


\begin{Remark}
Since $\gnk$ is the union of $k$ independent $\ug[n,1]$'s, the probability that $\gnk$ has a perfect matching (Hamilton cycle) increases with $k$. Frieze et al.~\cite{FGPR} showed that, whp, $\ug[n,2]$ is not Hamiltonian. On the other hand, $\ug[n,1]$ is a tree, and it is easy to show that it has many induced cherries whp, which implies the lack of a perfect matching in $\ug[n,1]$. So with the resuls above, we have
\[
\min\{k: \gnk \text{ has a perfect matching whp}\} \in \{2,3,4,5\}
\]
and
\[
\min\{k: \gnk \text{ has a Hamilton cycle whp}\}  \in \{3,\dots,13\}.
\]
\end{Remark}

\section{Expansion of $\gnk$}\label{sec:expansion}

The proofs of the main theorems rely on the fact that small sets in $\gnk$ expands well, and this section is dedicated to establishing this fact.


\begin{definition}
Let $G=(V,E)$ be a graph and $\alpha\in (0,1)$ and $\beta >0$ be two constants. We say that $G$ is an $(\alpha,\beta)$-expander if 
$|N_G(X)| \ge \beta|X|$ for every $X\subseteq V$ such that $|X|\le \a|V|$.
\end{definition}

To show the existence of perfect matchings and Hamilton cycles in uniform attachment graphs, we will use that $\ug$ is $(\a_1(k),1)$-expander and $(\a_2(k),2)$-expander whp, respectively, for suitable constants $\a_1(k)$ and $\a_2(k)$. As one would expect, large $\a_i$ helps us.

In order to show $\gnk$ is an $(\a,\be)$ expander, we first identify a subset $X\subseteq V$ which maximizes the probability
$\pr(|N(X)| <\beta |X|)$
for a given subset cardinality $|X|$.
For $\ell \in [n]$, let $\preceq_\ell$ denote the \emph{lexicographic order} on the $\ell$-subsets of $[n]$; if $X=\{x_1<\cdots<x_\ell\}$ and $Y=\{y_1<\cdots <y_\ell\}$, then $X \preceq_\ell Y$ if and only if $x_i\le y_i$ for all $i\in [\ell]$.

\begin{lemma}\label{lem:sd}
If  $X\preceq_\ell Y$, then $|N_{\ug}(X)|$ stochastically dominates $|N_{\ug}(Y)|$.
\end{lemma}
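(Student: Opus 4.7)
The plan is in two stages: reduce to the case where $X$ and $Y$ differ by a single unit shift of one element, and then establish that case by an explicit coupling.

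Stage one is purely combinatorial. Writing $X=\{x_1<\cdots<x_\ell\}\preceq_\ell Y=\{y_1<\cdots<y_\ell\}$, process the indices $i=\ell,\ell-1,\ldots,1$ in turn and, while $x_i<y_i$, replace $x_i$ by $x_i+1$. Each such step is legal because by the time we reach index $i$ the elements at larger indices have been set to $y_{i+1}<\cdots<y_\ell$, so the proposed new value $x_i+1\le y_i<y_{i+1}$ cannot already belong to the current set. The resulting chain $X=X_0\preceq_\ell\cdots\preceq_\ell X_m=Y$ has consecutive sets differing by an elementary swap, so by transitivity of stochastic dominance it suffices to prove $|N_{\gnk}(X)|\succeq_{st}|N_{\gnk}(Y)|$ when $Y=(X\setminus\{a\})\cup\{a+1\}$ for some $a\in X$ with $a+1\notin X$.

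For the elementary case, write $Z=X\cap Y$ and $W=V\setminus(Z\cup\{a,a+1\})$, and construct two $\gnk$-distributed graphs $G,G'$ whose choice sequences $(C_j)$ and $(C'_j)$ are coupled so that $|N_G(X)|\ge|N_{G'}(Y)|$ pointwise. Outside the pair $\{a,a+1\}$ use the natural coupling: $C'_j=C_j$ for $j<a$, and for $j>a+1$ let $C'_j$ be the coordinate-wise swap $a\leftrightarrow a+1$ of $C_j$ (which preserves the uniform law on $[j-1]^k$). For the pair itself, draw for each $i\in[k]$ a coin $\tau_i\sim\operatorname{Ber}(1/a)$ together with independent uniform variables $u_i,v_i,\eta_i,\rho_i\in[a-1]$, and set
\[
c_{a+1,i}=\begin{cases}a&\tau_i=1\\u_i&\tau_i=0\end{cases},\quad c'_{a+1,i}=\begin{cases}a&\tau_i=1\\v_i&\tau_i=0\end{cases},\quad c_{a,i}=\begin{cases}\rho_i&\tau_i=1\\v_i&\tau_i=0\end{cases},\quad c'_{a,i}=\begin{cases}\eta_i&\tau_i=1\\u_i&\tau_i=0\end{cases}.
\]
Routine marginal computations show that, within each of $G$ and $G'$, the vectors $C_a,C_{a+1}$ (resp.\ $C'_a,C'_{a+1}$) are independent and uniform with the correct supports, so both marginals are genuine $\gnk$ graphs. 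The two key features of the coupling are (i) $\{a\in C_{a+1}\}=\{a\in C'_{a+1}\}=\{\exists i:\tau_i=1\}$ as events, and (ii) $C'_{a+1}\cap[a-1]\subseteq C_a$.

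The pointwise inequality then falls out by case analysis on $v$: for $v\in W$ with $v>a+1$ the coordinate-wise swap yields $v\in N_G(X)\iff v\in N_{G'}(Y)$; for $v\in W$ with $v<a$ the two membership conditions differ only in $\{v\in C_a\}$ versus $\{v\in C'_{a+1}\}$, so (ii) gives $v\in N_{G'}(Y)\Rightarrow v\in N_G(X)$; and for the boundary vertices $a+1\in V\setminus X$ and $a\in V\setminus Y$, a short argument using (i) and the coordinate-wise tie $c_{a+1,i}=u_i,\;c'_{a,i}=u_i$ on $\{\tau_i=0\}$ shows $a\in N_{G'}(Y)\Rightarrow a+1\in N_G(X)$. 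Summing yields $|N_G(X)|\ge|N_{G'}(Y)|$ a.s., which by Strassen's theorem is equivalent to stochastic dominance. The main obstacle is precisely the coupling design: because $C_a$ lives on $[a-1]$ while $C_{a+1}$ lives on $[a]$, and both must remain mutually independent within each graph, any naive swap-type coupling either breaks the marginals or disconnects the "extra" event $\{a\in C_{a+1}\}$ across the two graphs. The synchronizing coin $\tau_i$ is the device that reconciles both demands simultaneously.
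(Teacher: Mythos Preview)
Your argument is correct and follows essentially the same route as the paper: reduce to an elementary swap $X\mapsto Y=(X\setminus\{a\})\cup\{a+1\}$, then couple the two graphs by leaving choices of vertices $j<a$ untouched, applying the transposition $a\leftrightarrow a+1$ to the choices of vertices $j>a+1$, and handling the pair $(a,a+1)$ specially according to whether $a$ is chosen by $a+1$. The only notable implementation difference is that the paper realises the coupling as a deterministic measure-preserving bijection on the finite space of choice sequences (so no marginal checks are needed), whereas you build a randomized coupling with auxiliary variables $\rho_i,\eta_i$ and must verify that $(C_a,C_{a+1})$ and $(C'_a,C'_{a+1})$ remain independent uniform pairs---which you do correctly.
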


The following corollary follows immediately.

\begin{corollary}\label{cor:worstX}
For any positive integers $\ell$ and $m$, the maximum of the set of probabilities
\[
\{\pr(|N_{\ug}(X)| <m) \, : \,  X\subset [n] \text{ and } |X|=\ell\}
\]
is attained at $X=[n-\ell+1,n]$. \qed
\end{corollary}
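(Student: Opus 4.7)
The plan is to deduce the corollary directly from Lemma~\ref{lem:sd}, which we are allowed to assume. The key observation is that $[n-\ell+1,n]$ is the lex-maximum element of the poset of $\ell$-subsets of $[n]$ under $\preceq_\ell$: for any $\ell$-subset $X=\{x_1<\cdots<x_\ell\}\subseteq[n]$, the $i$-th smallest element $x_i$ satisfies $x_i\le n-\ell+i$, which is exactly the $i$-th smallest element of $[n-\ell+1,n]$. Thus $X\preceq_\ell [n-\ell+1,n]$ for every $\ell$-subset $X$ of $[n]$.

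By Lemma~\ref{lem:sd}, applied with $Y=[n-\ell+1,n]$, the random variable $|N_{\ug}(X)|$ stochastically dominates $|N_{\ug}([n-\ell+1,n])|$ for every such $X$. Stochastic domination is the statement that $\pr(|N_{\ug}(X)|\ge t)\ge \pr(|N_{\ug}([n-\ell+1,n])|\ge t)$ for every $t$; taking complements and specializing to $t=m$ yields
\[
\pr(|N_{\ug}(X)|<m)\le \pr(|N_{\ug}([n-\ell+1,n])|<m),
\]
which is exactly the claim. Since there is no genuine obstacle here beyond verifying that $[n-\ell+1,n]$ is the top element of $\preceq_\ell$, the proof is essentially a one-line application of the lemma once the definitions are unwound, and no further case analysis or computation is needed.
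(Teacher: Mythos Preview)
Your proposal is correct and matches the paper's approach exactly: the paper states that the corollary ``follows immediately'' from Lemma~\ref{lem:sd} and gives no further argument, and you have simply unwound that immediacy by noting that $[n-\ell+1,n]$ is the $\preceq_\ell$-maximum $\ell$-subset and that stochastic domination reverses the tail inequality $\pr(\,\cdot<m)$.
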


\begin{proof}[{Proof of Lemma~\ref{lem:sd}}]
It is enough to prove that the statement of the lemma is true for all pairs $(X,X')$, where $X'$ covers $X$, i.e.\ there exists some $x$ such that
$X\setminus X'=\{x\}$ and $X'\setminus X=\{x+1\}$.
Suppose $(X,X')$ is such a pair.

Let us simply write $G$ for the graph $\ug$. 
Note that $G$ is determined by $k(n-1)$ choices made by the vertices $[2,n]$.
Let $\bs Z=(Z_{2,1},\dots,Z_{2,k},\dots, Z_{n,1}\dots, Z_{n,k})$,
where $Z_{i,j}$ represents the $j$-th choice of vertex $i$. 
Hence $Z_{i,j} \in [i-1]$ for all $i\in [n]$ and $j\in[k]$.
Let $\cal S$ denote the set of all $(n-1)!^k$ possible sequences of choices.
For $\bs z\in \cal S$, let $\Gamma(\bs z)$ denote the resulting graph, i.e.\ for $u<v$, the edge $uv$ is present if and only if $u\in \{z_{v-1,1},\dots,z_{v-1,k}\}$.
Since $\pr(\bs Z=\bs z) = 1/(n-1)!^k$ for every $\bs z \in \cal S$, for a given graph $H$ on $[n]$,
\[
\pr(G=H)= \pr(\Gamma(\bs Z)=H)=\frac{\#\{\bs z : \Gamma(\bs z)=H\}}{(n-1)!^k}.
\]
Our plan is to find a bijection $\bs z \mapsto \bs z'$ such that $|N_{\Gamma(\bs z')}(X')| \le |N_{\Gamma(\bs z)}(X)|$.
For a given sequence $\bs z \in \cal S$, let us call $(z_{j,1},\dots,z_{j,k})$ the $j$-th block for $j \in [2,n]$.
For $\bs z \in \cal S$, we obtain $\bs z'$ from $\bs z$ in the following way.
\begin{itemize}
\item For any $i<x$, the $i$-th blocks of $\bs z$ and $\bs z'$ are the same. In other words, the first $(x-2)k$ entries of $\bs z'$ match those of $\bs z$.
\item We swap the corresponding entries of the $x$-th and $(x+1)$-th blocks as long as there is no $x$ in the $(x+1)$-th block of $\bs z$. If there are some $x$'s in the $(x+1)$-th block of $\bs z$, then those components are not swapped but the other ones are swapped.
\item For the entries appearing after $(x+1)$-th block, we make the change $x \longleftrightarrow x+1$. In other words, we replace every occurrence of $x$ in $\bs z$ with an $x+1$,  and every occurrence of $x+1$ with an $x$. The other entries are not affected. For example, if $\bs z$ after the $(x+1)$-th block looks like $(\dots, x, \dots, x, \dots, x+1,\dots x)$, then $\bs z'$ after the $(x+1)$-th block looks like $(\dots, x+1, \dots, x+1, \dots, x,\dots x+1)$, where the dotted parts are the same.
\end{itemize}
Formally, $\bs z'$ is defined as
\beq\label{zz'}
z'_{i,j}=\begin{cases}
z_{i+1,j}        & \text{if $i=x$ and $z_{x+1,j}\not =x$}\\
z_{i-1,j}        & \text{if $i=x+1$ and $z_{x+1,j}\not =x$}\\
x & \text{if $i>x+1$ and $z_{i,j}=x+1$}\\
x+1 & \text{if $i>x+1$ and $z_{i,j}=x$}\\
z_{i,j} & \text{otherwise.}
\end{cases}
\eeq
\begin{Example}
Let $k=2$, $n=8$, and $x=4$. For convenience, entries in the same block are put between brackets.
\[
\bs z= ([1,1], [2,1] ,[3,1],[4,2],[3,5],[4,4], [5,2]) 	\mapsto
\bs z'=(\underbrace{[1,1], [2,1]}_{\text{no change}} , \underbrace{[3,2],[4,1]}_{\text{partial swap}}, \underbrace{[3,4],[5,5], [4,2]}_{x\longleftrightarrow x+1})
\]
\end{Example}
Note that the procedure described above to get $\bs z'$ from $\bs z$ is reversible, and hence the function $\bs z\mapsto \bs z'$ is a bijection.
\begin{Claim}
$|N_{\Gamma(\bs z')}(X')| \le |N_{\Gamma(\bs z)}(X)|$.
\end{Claim}

\begin{proof}[Proof of the claim]
Let $A=N_{\Gamma(\bs z)}(X)$ and $A'=N_{\Gamma(\bs z')}(X')$. We will prove
\begin{enumerate}
\item[(1)] $x\in A' \iff x+1\in A$ and 
\item[(2)] $A' \setminus\{x\} \subseteq A\setminus \{x+1\}$,
\end{enumerate}
from which the proof of the lemma follows easily.

Let us prove (1) first. Note that if $z_{x+1,j} =x$ for some $j \in [k]$, then $x\in A'$ and $x+1\in A$, in which case we are done.
Now suppose $z_{x+1,j}\not=x$ for all $j\in[k]$. By~\eqref{zz'},
\begin{align*}
x\in A' &\iff z_{x,j} \in X' \text{ for some } j \in [k], \text{ or } z_{i,j}=x \text{ for some } i\in X'\setminus \{x+1\} \text{ and } j\in [k] \\
&\iff z_{x+1,j} \in X \text{ for some } j\in [k], \text{ or } z_{i,j}=x+1 \text{ for some } i\in X \text{ and } j\in [k] \\
&\iff x+1\in A.
\end{align*}
This proves (1).
Now suppose $y\in A'\setminus\{x\}$. (In particular, this means $y\not=x$ and $y\not= x+1$.) Hence, either $z'_{y,j} \in X'$ for some $j\in[k]$, or $z'_{i,j} =y$ for some $i\in [y+1,n]$ and $j\in [k]$. 
By~\eqref{zz'},
\begin{itemize}
\item if $z'_{y,j} \in X'$, then $z_{y,j} \in X$,
\item if $z'_{i,j} =y$ for some $i\in X\setminus\{x+1\}$, then $z'_{i,j} =y$,
\item if $z'_{x+1,j}=y$, then $z_{x,j}=y$.
\end{itemize}
In any case, $y\in A$, which finishes the proof of (2) and the proof of the claim.
\end{proof}
Finally,  by the claim above and since $\bs z\mapsto \bs z'$ is a bijection, we have
\begin{align}\label{NGX vs NGX'}
\pr(|N_G(X)| \ge m) &=\pr(|N_{\Gamma(\bs Z)}(X)| \ge m) \notag \\
&= \frac{ \#\{\bs z \in {\cal S}: |N_{\Gamma(\bs z)}(X)| \ge m \}}{(n-1)!^k} \ge   \frac{ \#\{\bs z \in {\cal S}: |N_{\Gamma(\bs z')}(X')|\ge m \}}{(n-1)!^k} \\
&= \pr(|N_{\Gamma(\bs Z)}(X')| \ge m) =\pr(|N_G(X')| \ge m). \notag \tag*{\qedhere}
\end{align}
\end{proof}

We are particularly interested in what Corollary~\ref{cor:worstX} gives us in the cases of $m=\ell$ and $m=2\ell$, which will be needed for finding perfect matchigs and Hamilton cycles, respectively.
Now let us bound $\pr(|N_{\gnk}(X)|<m)$ and  $\pr(|N_{\gnk}(X)|<2m)$, where $X=[n-m+1,n]$.

\begin{lemma}\label{lem:tailX}
Let $X=[n-m+1,n]$. We have
\begin{align} 
\pr(|N_{\gnk}(X)|<m) & \le {n-m \choose m-1} \lp \frac{(2m)_m}{(n)_m}\rp^k, 			\label{<m}		\\
\pr(|N_{\gnk}(X)|<2m) & \le {n-m \choose 2m-1} \lp \frac{(3m)_m}{(n)_m}\rp^k,		 	\label{<2m}
\end{align}
where $(a)_j:=a(a-1)\cdots(a-j+1)$ for any positive integer $j$.
\end{lemma}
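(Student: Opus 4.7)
The plan is to apply Corollary~\ref{cor:worstX}, reduce to a union bound over potential "target" sets, and then exploit the independence of the $k$ choices made by each vertex. By Corollary~\ref{cor:worstX}, it suffices to work with $X=[n-m+1,n]$. A key structural observation is that vertices in $[1,n-m]$ make their choices entirely within $[1,n-m-1]$, so they create no edges to $X$; hence $N_{\gnk}(X)$ consists precisely of those vertices $u\in[1,n-m]$ that are chosen by some $v\in X$.

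For the first bound, I would say that the event $\{|N(X)|<m\}$ implies the existence of a set $S\subseteq [1,n-m]$ with $|S|=m-1$ such that every one of the $mk$ choices made by vertices of $X$ falls into $S\cup([n-m+1,v-1])$, where $v\in X$ is the vertex making the choice (the second piece accounts for choices that stay inside $X$ and so do not contribute to $N(X)$). Taking a union bound over the $\binom{n-m}{m-1}$ possible sets $S$ and using independence of the $k$ choices of each vertex gives
\[
\pr(|N(X)|<m) \;\le\; \binom{n-m}{m-1}\prod_{v=n-m+1}^{n}\left(\frac{(m-1)+(v-n+m-1)}{v-1}\right)^{k}.
\]
A short algebraic check shows that each factor $\frac{v+2m-n-2}{v-1}$ is bounded by $\frac{v+2m-n}{v}$ whenever $v\ge 2m-n$ (which holds throughout the range for $m$ not too large relative to $n$), and after re-indexing $j=v$ one recognizes
\[
\prod_{v=n-m+1}^{n}\frac{v+2m-n}{v} \;=\; \frac{(2m)_m}{(n)_m},
\]
yielding \eqref{<m}.

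The second bound \eqref{<2m} follows the same route with the obvious modifications: now $|S|=2m-1$, so the union-bound factor becomes $\binom{n-m}{2m-1}$; and the per-choice probability of landing in $S\cup[n-m+1,v-1]$ becomes $\frac{(2m-1)+(v-n+m-1)}{v-1}=\frac{v+3m-n-2}{v-1}$, which is comparably bounded by $\frac{v+3m-n}{v}$, producing $\bigl((3m)_m/(n)_m\bigr)^k$.

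I expect the only delicate point to be the clean algebraic upgrade from the sharp product $\prod_v \frac{v+2m-n-2}{v-1}$ (respectively with $3m$) to the cleaner form $\frac{(2m)_m}{(n)_m}$ (respectively $\frac{(3m)_m}{(n)_m}$) stated in the lemma; everything else is a direct union bound plus independence. This simplification is cosmetically useful because the target expression in the statement matches the form that will be summed over $m$ in Section~\ref{sec:matchings} and Section~\ref{sec:cycles}, so it is worth doing even at the cost of a mild loss.
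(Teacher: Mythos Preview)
Your argument is correct and essentially identical to the paper's: both take a union bound over the $\binom{n-m}{m-1}$ (resp.\ $\binom{n-m}{2m-1}$) possible target sets $Y\subseteq[n-m]$, compute $\pr(N(X)\subseteq Y)=\prod_{i=0}^{m-1}\bigl(\tfrac{m-1+i}{n-m+i}\bigr)^k$ from independence of the $k$ choices of each $v\in X$, and then bound this product by the cleaner $((2m)_m/(n)_m)^k$. One cosmetic remark: the lemma already fixes $X=[n-m+1,n]$, so invoking Corollary~\ref{cor:worstX} at the start is unnecessary here (it is used one step later, in Corollary~\ref{a1,a2 expanders}).
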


\begin{proof}
We will only prove~\eqref{<m} since the proof of~\eqref{<2m} is almost identical.
For simplicity, write $N(X)$ for $N_{\gnk}(X)$.
We have
\beq\label{sumoverY}
\pr(|N(X)|<m) \le \sum_{Y\subseteq [n-m]\atop |Y|=m-1} \pr(N(X)\subseteq Y).
\eeq
On the other hand, for a given $Y$ of size $m-1$,
\[
\pr(N(X)\subseteq Y) = \prod_{i=0}^{m-1} \lp \frac{m-1+i}{n-m+i}\rp^k
\]
since for $N(X)\subseteq Y$ to happen, vertex $n-m+1+i$ must choose from $Y\cup [n-m+1,n-m+i]$. It is easy to see that the right hand side is smaller than $((2m)_m/(n)_m)^{k}$. Since there are ${n-m \choose m-1}$ summands in the sum on the right hand side of~\eqref{sumoverY}, we have
\[
\pr(|N_G(X)|<m) \le {n-m \choose m-1} \lp \frac{(2m)_m}{(n)_m}\rp^k .							 \qedhere
\]
\end{proof}

\begin{corollary}\label{a1,a2 expanders}
Let 
\[
H(x)=-x\log_2x-(1-x)\log_2(1-x)
\]
be the binary entropy function. 
For $k\ge 3$, let $\a_1(k)$ be the unique solution \textup{(}in $(0,1/2)$\textup{)} of 
\[
2(k+1)x+H(2x)-kH(x)=0.
\]
Similarly, for $k\ge 4$, let $\a_2(k)$ be the unique solution \textup{(}in $(0,1/3)$\textup{)} of 
\[
\log_2(27/4)(k+1)x + H(3x) -kH(x)=0.
\]
Then the following hold:
\begin{enumerate}
\item[(i)] For any $\a<\a_1(k)$,
\beq\label{expansion1}
\sum_{|X|\le \a n}\pr(|N_{\ug}(X)|< |X|) = O\left(n^{3-2k}\right).
\eeq
\item[(ii)]
For any $\a<\a_2(k)$,
\beq\label{expansion2}
\sum_{|X|\le \a n}\pr(|N_{\ug}(X)|< 2|X|) = O\left( n^{2-k}\right).
\eeq
\end{enumerate}
\end{corollary}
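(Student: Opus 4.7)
The plan is to combine Corollary~\ref{cor:worstX} and Lemma~\ref{lem:tailX} into a uniform tail bound, take a union bound over the $\binom{n}{m}$ sets $X\subseteq[n]$ of size $m$, and split the resulting sum over $m$ into a constant-$m$ regime handled by direct polynomial estimates and a growing-$m$ regime handled by binary-entropy estimates.

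Corollary~\ref{cor:worstX} combined with Lemma~\ref{lem:tailX} gives
\[
\sum_{|X|=m}\pr(|N(X)|<m)\;\le\; T_m\;:=\;\binom{n}{m}\binom{n-m}{m-1}\left(\frac{(2m)_m}{(n)_m}\right)^k,
\]
and similarly a bound $\tilde T_m$ (with $(3m)_m$ and $\binom{n-m}{2m-1}$) for (ii). For bounded $m\ge 2$, direct polynomial estimates give $T_m=O(n^{-m(k-2)-1})$, a decreasing sequence when $k\ge 3$ whose leading term is $T_2=O(n^{3-2k})$. The $m=1$ contribution is trivially zero because every vertex of $\gnk$ has a neighbor: each $v\ge 2$ has $k$ out-neighbors, and vertex $1$ is the forced choice of vertex $2$. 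Likewise $\tilde T_m=O(n^{-m(k-3)-1})$ under $k\ge 4$, with leading term $\tilde T_1=O(n^{2-k})$.

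For growing $m$, rewrite $\binom{n}{m}\binom{n-m}{m-1}=\binom{n}{2m-1}\binom{2m-1}{m}$ and $(2m)_m/(n)_m=\binom{2m}{m}/\binom{n}{m}$, and apply the entropy inequalities $\binom{n}{r}\le 2^{nH(r/n)}$, $\binom{n}{r}\ge 2^{nH(r/n)}/(n+1)$, together with $\binom{2m-1}{m},\binom{2m}{m}\le 2^{2m}$. This yields
\[
\log_2 T_m\;\le\; n\,f_1(m/n)+O(k\log n), \qquad f_1(x):=H(2x)+2(k+1)x-kH(x),
\]
which is exactly the function defining $\alpha_1(k)$. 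One checks $f_1(0)=0$ and $f_1'(0^+)=-\infty$ for $k\ge 3$, so $f_1<0$ throughout $(0,\alpha_1(k))$; consequently, for any fixed $\alpha<\alpha_1(k)$ and any $\delta>0$, $f_1\le-c$ on $[\delta,\alpha]$ for some $c>0$, making $T_m$ exponentially small whenever $m/n\ge\delta$. The Stirling-type bound $\binom{3m}{m}\le(27/4)^m$ gives the analogous statement for (ii) with $f_2(x)=H(3x)+\log_2(27/4)(k+1)x-kH(x)$.

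The main obstacle is the bridging regime $m\to\infty$, $m/n\to 0$, where the entropy bound degenerates since $f_1(m/n)\to 0$. I close this gap using the coarser estimates $(2m)_m/(n)_m\le(2m/n)^m$ and $\binom{n}{m}\binom{n-m}{m-1}\le e^{2m}(n/m)^{2m-1}$, which together give $T_m\le (2^ke^2)^m(m/n)^{m(k-2)+1}$. Once $m/n$ lies below an explicit threshold depending only on $k$, this decays geometrically in $m$, so its contribution telescopes to $O(T_2)=O(n^{3-2k})$; the analogous argument handles (ii).
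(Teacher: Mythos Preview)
Your proof is correct and follows essentially the same route as the paper's: union bound via Corollary~\ref{cor:worstX} and Lemma~\ref{lem:tailX}, entropy estimates for $m$ of linear size, and a ratio/geometric-decay argument for small $m$. The only cosmetic difference is that the paper handles the entire range $2\le m\le \eps n$ in one stroke by computing the exact ratio $q_{m+1}/q_m$ and observing it is $O\bigl((m/n)^{k-2}\bigr)$, whereas you split off a ``bounded $m$'' regime and then pass through the auxiliary upper bound $U_m=(2^ke^2)^m(m/n)^{m(k-2)+1}$ before invoking geometric decay; both arrive at $\sum_{m\le \eps n} q_m = O(q_2)=O(n^{3-2k})$.
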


\begin{proof}
The proof of~\eqref{expansion2} is almost identical to the proof of~\eqref{expansion1}, so we will prove~\eqref{expansion1} and note the difference for the other equation. Let $\a<\a_1(k)$ and $M:=\floor{\a n}$.
By Corollary~\ref{cor:worstX} and Lemma~\ref{lem:tailX}, the probability that there is some subset $X$ such that $|X|=m>|N(X)|$ is at most
\[
q_m:= {n\choose m} {n-m \choose m-1} \lp \frac{(2m)_m}{(n)_m}\rp^k = {n\choose 2m-1} {2m-1 \choose m} \lp \frac{{2m \choose m}}{{n \choose m}}\rp^k.
\]
Hence the probability that there is some such $X$ with $|X|\le M:=\a n$ is at most
\[
 \sum_{m=2}^M q_m =\sum_{m=2}^M {n\choose 2m-1} {2m-1 \choose m} \lp \frac{{2m \choose m}}{{n \choose m}}\rp^k.
\]
Since
\[
\frac{q_{m+1}}{q_m}= 2^k\cdot \frac{(n-2m+1)(n-2m)}{m(m+1)} \cdot \lp \frac{2m+1}{n-m}\rp^k,
\]
$q_m$ is decreasing quickly for $m\le \eps n$, where $\eps$ is a small enough constant. 
Also, since $q_2=O(n^{3-2k})$ and $q_2=O(n^{5-3k})$ we have
\[
\sum_{m=2}^M q_m= \sum_{m=2}^{\eps n} q_m+ \sum_{m=\eps n+1}^M q_m=\le O(n^{3-2k})+n \max_{\eps n\le m\le M}q_m.
\]
For $m=cn>\eps n$, using
\[
{2m-1 \choose m} \le {2m \choose m} \le 2^{2m}=2^{2cn}, \quad {n\choose 2m-1} \le 2^{nH(2c)}, \quad {n\choose m}^{-1} \le O\left( \sqrt n \,2^{-nH(c)}\right),
\]
we get
\[
q_m \le O\left( n^{k/2} \cdot 2^{n\big(2c(k+1)+H(2c)-kH(c)\big)}\right).
\]
Since $f(x)=2(k+1)x+H(2x)-kH(x)$ is negative on $(0,\a_1(k))$, the right hand side of the inequality above is exponentially small, which means that $n \max\{q_m : \eps n\le m\le M\}$
is exponentially small.

Equation~\eqref{expansion2} can be obtained by running the same argument with
\[
q'_m:=  {n\choose m} {n-m \choose 2m-1} \lp \frac{(3m)_m}{(n)_m}\rp^k
\]
instead of $q_m$. (In this case the sum starts with $m=1$ and $q'_1= O(n^{2-k})$.)
\end{proof}

\subsection*{Numerical results for $\a_1(k)$ and $\a_2(k)$}
Recall from Corollary~\ref{a1,a2 expanders} that $\alpha_1(k)$ (for $k\ge 3$) and $\alpha_2(k)$ (for $k\ge 4$) are the unique positive solutions of the equations
\[
2(k+1)x+H(2x)-kH(x)=0 \quad \text{and} \quad \log_2(27/4) (k+1)x+H(3x)-kH(x)=0,
\]
respectively, and $\gnk$ is $(c,1)$-expander for any $c<\alpha_1(k)$ and $(d,2)$-expander for any $d<\a_2(k)$.
Numerical computations performed in MATLAB gives
\beq\label{a1 values}
\alpha_1(3)> 0.043, \quad \alpha_1(4)> 0.172,         
\eeq
and
\begin{alignat}{5}\label{a2 values}
&\alpha_2(4) > 0.005, \quad  &&\alpha_2(5)> 0.048,  \quad  &&\alpha_2(6) >0.101,   \quad   &&\alpha_2(7) > 0.144,   \quad   &&\alpha_2(8) > 0.177,  \notag \\
&\alpha_2(9) > 0.202,  \quad &&\alpha_2(10) > 0.221,   \quad &&\alpha_2(11) > 0.235, \quad  &&\alpha_2(12) > 0.247,   \quad &&\alpha_2(13) > 0.257.
\end{alignat}
(The right hand sides of the inequalities above match $\a_i(j)$ up to three digits after the decimal points,  so they can be used as approximate values of $\a_i(j)$.)

\section{Perfect matchings}\label{sec:matchings}

For a graph $G$ without a perfect matching, let $A=A(G)$ denote the set of vertices that are not covered by at least one maximum matching. 
Also, for $v\in A$, let $B(v)$ denote the set of vertices $w\not=v$ for which there is a maximum matching that does not cover both $v$ and $w$. 
It follows from the definition that $B(v)\subseteq A$ for every $v\in A$.
The following lemma is one of the key tools for the proof of Theorems~\ref{thm:k=4} and \ref{thm:k=5}.

\begin{lemma}[{\cite[Lemma 6.3]{FK}}]\label{B and N(B)}
If $G$ is a graph without a perfect matching and $v\in A(G)$, then
\[
|N(B(v))| <|B(v)|.
\]
\end{lemma}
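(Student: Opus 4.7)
I will fix a maximum matching $M$ of $G$ missing $v$ (which exists because $v \in A(G)$), write $B = B(v)$ and $N = N(B)$, and build an injective map $\phi \colon N \to B$ given by $\phi(u) = M(u)$; exhibiting a vertex of $B$ outside $\phi(N)$ will then give $|N| \le |B| - 1 < |B|$. Two easy preliminaries: $v \notin N$, since any $w \in B$ adjacent to $v$ would let one augment a maximum matching missing $\{v,w\}$ by the edge $vw$; and every $u \in N$ is $M$-saturated, because otherwise $M$ itself would be a maximum matching missing both $v$ and $u$, placing $u$ into $B$ and contradicting $u \in N$.

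The substantive step is to show that $u' := M(u)$ lies in $B$ for every $u \in N$. Pick $w \in B$ with $u \sim w$. If $u' = w$ we are done; if $w$ is $M$-unmatched, then $(M \setminus \{uu'\}) \cup \{uw\}$ is a maximum matching missing $\{v, u'\}$, so again $u' \in B$. Otherwise, assume $u' \neq w$ and $w$ is $M$-matched, and take a maximum matching $M_w$ missing $\{v, w\}$. In the symmetric difference $M \triangle M_w$, the vertex $v$ is isolated, and the component containing $w$ is an alternating path $P_w$ of even length whose other endpoint $z$ is $M$-unmatched and distinct from $v$. When $u \notin P_w$, the concatenation of the edge $uw$ with $P_w$ from $w$ to $z$ is an alternating path from $u$ to $z$ in the edge set $M \setminus \{uu'\}$, and flipping it yields a maximum matching still missing $v$ but now missing $u'$, so $u' \in B$. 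When $u$ lies on $P_w$, the analogous flip of the $P_w$-subpath from $u$ to $z$ inside $M \setminus \{uu'\}$ does the job, provided the parity of $u$'s position on $P_w$ makes this subpath alternating.

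For the final count, $\phi$ is injective because $M$ is a matching. Since $G$ has no perfect matching, there are at least two vertices of $G$ unmatched by $M$, so some $w^\ast \neq v$ is $M$-unmatched; then $w^\ast \in B$ (witnessed by $M$ itself) but $w^\ast$ is not in $\phi(N)$, whose elements are all $M$-saturated. Hence $|N| = |\phi(N)| \le |B| - 1$.

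The main obstacle is the parity subcase above when $u$ lies on $P_w$: if $u$ is at the wrong parity position on $P_w$, the natural subpath from $u$ to $z$ fails to be $M$-alternating after $uu'$ is removed. I handle this by ruling that case out: in the wrong-parity scenario, the $P_w$-subpath from $u'$ to $z$ is itself an augmenting path in $M \setminus \{uu'\}$, and flipping it produces a maximum matching missing $\{v, u\}$, forcing $u \in B$ and contradicting $u \in N$. Hence only the favorable parity can occur, and the argument closes.
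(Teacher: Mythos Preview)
The paper does not prove this lemma; it is quoted verbatim from Frieze--Karo\'nski~\cite{FK} and used as a black box, so there is no ``paper's own proof'' to compare against. Evaluating your argument on its own merits: it is correct. Your map $\phi(u)=M(u)$ is well-defined because every $u\in N$ is $M$-saturated (else $u\in B$), and your case analysis showing $M(u)\in B$ is sound. In the key sub-case where $u$ lies on the alternating path $P_w$, the parity split is exactly the right move: if $u=p_j$ with $j$ odd then $u'=p_{j-1}$ sits outside the subpath $p_j\cdots z$, which is an $M_1$-augmenting path whose flip leaves $u'$ and $v$ exposed; if $j$ is even then $u'=p_{j+1}$, and flipping the subpath $p_{j+1}\cdots z$ inside $M_1$ produces a maximum matching missing $\{v,u\}$, contradicting $u\notin B$. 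The final counting step is clean, using that under the paper's convention (a perfect matching may isolate at most one vertex) the hypothesis ``$G$ has no perfect matching'' guarantees a second $M$-unsaturated vertex $w^\ast\neq v$, which lies in $B\setminus\phi(N)$.

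This is essentially the standard augmenting-path proof one finds in textbook treatments (and is in the spirit of the argument in~\cite{FK}); there is nothing to correct.
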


This lemma tells us that $B(v)$ does not expand. 
Since, by~\eqref{expansion1}, all the sets of size at most $(\a_1(k)-\eps)n$ expand in $\gnk$ for any constant $\eps>0$,
$B(v)$ must be large (whp) for every $v\in A(\gnk)$ as long as $\gnk$ does not have a perfect matching.

As noted in~\cite{FGPR}, for $k=k_1+k_2$, we view $G_{n,k}$ as the union of two independent graphs $G_{n,k_1}$ and $G_{n,k_2}$. 
The following lemma (and its proof) is essentially from~\cite{FGPR}, tailored for our purposes.

\begin{lemma} \label{thm: alpha and gamma}
Let $\gamma$ and $\alpha$ be two positive constants. 
Suppose $G$ is a graph on the vertex set $[n]$, which has a matching that isolates at most $\gamma n$ vertices.
Suppose also that every vertex subset of size at most $\alpha n$ expands in $G$.
Let $k$ be a positive  integer and 
\[
\zeta:=\alpha- \frac{1}{k+1}+\frac{(1-\alpha)^{k+1}}{k+1}.
\]
\begin{enumerate}
\item[(i)] If $\zeta> \gamma/2$, then $G \cup \ug$ has a perfect matching whp.
\item[(ii)] If $\zeta < \gamma/2$,  then $G \cup \ug$ has a matching that isolates at most $(1+o(1))(\gamma-2\zeta)n$ vertices whp.
\end{enumerate}
\end{lemma}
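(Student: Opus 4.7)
The strategy is an augmentation argument. Fix a matching $M$ of $G$ with isolated set $U=U_M$, $|U|\le \gamma n$, and expose the edges of $\ug$ vertex-by-vertex, processing vertices in decreasing label order $v=n,n-1,\dots,2$. At each $v$ we look at $v$'s $k$ random choices in $[v-1]$ and try to improve the current matching.

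The basic operation is a length-$1$ augmentation: if $v$ is currently isolated and some choice $u$ is also currently isolated, add $(v,u)$ to the matching. To extract more augmentations we also allow length-$3$ augmenting paths: if $v$ is isolated and a choice $u$ is matched in $M$ to some $u'$, then the small size of the current isolated set, together with the hypothesis that every set of size at most $\alpha n$ expands in $G$, supplies a $G$-neighbor $w$ of $u'$ (or of a vertex reachable from $u'$ via one $M$-alternating step) that is isolated, producing an augmenting path $v\text{--}u\text{--}u'\text{--}w$. In either case a single augmentation reduces $|U|$ by $2$.

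The central calculation is an expected-gain bound. For each vertex $v$ at fractional position $t=v/n$, the probability that $v$ is chosen by at least one later vertex in $\ug$ equals
\[
1-\prod_{w>v}\!\Bigl(1-\tfrac{1}{w-1}\Bigr)^{\!k}\;=\;1-\bigl(\tfrac{v-1}{n-1}\bigr)^{k}\;=\;1-t^{k}+o(1).
\]
The hypothesis that $G$ expands sets of size at most $\alpha n$ is exactly what is needed to turn such a ``hit'' into a genuine augmentation when $v$ lies in the top-$\alpha n$ range. Summing the success probabilities over $t\in[1-\alpha,1]$ yields, via
\[
\int_{1-\alpha}^{1}(1-t^{k})\,dt \;=\; \alpha-\frac{1-(1-\alpha)^{k+1}}{k+1}\;=\;\zeta,
\]
an expected gain of $2\zeta n$ in the size of the matching. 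Concentration follows from the bounded-differences inequality applied to the filtration given by vertex-by-vertex exposure, since each revealed vertex alters the augmentation count by at most $O(k)$. This gives the bound in part (ii): whp the final matching isolates at most $(1+o(1))(\gamma-2\zeta)n$ vertices.

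Part (i) follows from the same estimate: when $\zeta>\gamma/2$, the expected gain $2\zeta n$ exceeds $\gamma n$, so with the concentration bound above the algorithm whp leaves at most $o(n)$ vertices isolated, and Lemma~\ref{B and N(B)} combined with the expansion of $\ug$ itself (Corollary~\ref{a1,a2 expanders}) rules out any remaining obstruction to a perfect matching. The main obstacle is the dependence introduced by the augmentation process: each new edge alters the isolation status of later vertices and hence their success probabilities. Handling this requires a careful deferred-decision / martingale argument with a filtration that reveals only $v$'s choices at step $v$, together with a precise specification of the length-$3$ step ensuring that the $\alpha n$-size expansion of $G$ always supplies an admissible endpoint.
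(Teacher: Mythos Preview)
There is a genuine gap. Your algorithm and your probability computation do not match: you say that at each $v$ you look at $v$'s $k$ random choices in $[v-1]$, but the quantity you then compute, $1-\prod_{w>v}(1-\tfrac{1}{w-1})^k=1-t^k$, is the probability that $v$ is chosen \emph{by some later vertex}. These are different events, and the second one is not what your algorithm reveals at step $v$. The fact that your integral $\int_{1-\alpha}^1(1-t^k)\,dt$ happens to equal $\zeta$ is a numerical coincidence, not a consequence of the augmentation scheme you described.

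More importantly, the link ``expansion $\Rightarrow$ augmentation'' is left as an assertion. You say that expansion of sets of size $\le \alpha n$ in $G$ ``supplies'' an isolated endpoint for a length-$3$ augmenting path, but this is not true in general: expansion of \emph{all} small sets does not by itself guarantee that a particular $M$-alternating walk from an isolated vertex reaches another isolated vertex. The paper handles this cleanly by putting Lemma~\ref{B and N(B)} at the \emph{center} of the argument rather than using it only for a final clean-up. It works with the maximum matching throughout: for $v\in A(G_i)$ the set $B(v)$ of ``joint uncovered partners'' fails to expand, hence $|B(v)|>\alpha n$, and a single $\ug$-edge from $v$ into $B(v)$ increases the maximum matching size by one. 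No explicit augmenting-path construction is needed. The vertex to expose in step $i$ is chosen as the \emph{largest unexposed} element of $A(G_{i-1})$; this guarantees that at most $i-1$ elements of $B(v_i)$ lie above $v_i$, so at least $\alpha n-(i-1)$ of them are legitimate targets for $v_i$'s choices in $[v_i-1]$, giving a per-step success probability of at least $1-(1-(\alpha n-i+1)/n)^k$. Summing over $i\le \alpha n$ yields $\zeta n$ expected successes, and Chernoff finishes both parts directly. Your sketch would need to be rebuilt around this mechanism; in particular the $B(v)$ machinery is doing the work that your unspecified length-$3$ step cannot.
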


In the proof of this lemma and in several other places we will use the following Chernoff bound (see~\cite[Chapter 2]{JLR}).

\begin{theorem}
If $X_1,\dots,X_n$ are independent Bernoulli random variables, $X=\sum_{i=1}^n X_i$, and $\mu=\mean[X]$, then
\beq\label{Chernoff}
\pr(|X-\mu| \ge \eps\mu)\le 2e^{-\eps^2\mu/3}
\eeq
for any $0<\eps<3/2$. In particular, the same bound holds when $X$ is a binomial random variable with $\mu=\mean{X}$.
\end{theorem}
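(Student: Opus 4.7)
The plan is to use the standard moment generating function (MGF) approach via Markov's inequality, handling the upper and lower tails separately and then combining them with a union bound. The starting point is the crude MGF bound: for any real $t$ and each Bernoulli $X_i$ with parameter $p_i$, the inequality $1+x\le e^x$ gives
\[
\mean[e^{tX_i}] = 1-p_i+p_ie^t \le \exp\!\left(p_i(e^t-1)\right),
\]
and independence then yields $\mean[e^{tX}]\le \exp(\mu(e^t-1))$.

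For the upper tail, I would fix $t>0$ and apply Markov's inequality to $e^{tX}$, obtaining
\[
\pr(X\ge (1+\eps)\mu)\le e^{-t(1+\eps)\mu}\exp(\mu(e^t-1)).
\]
Optimizing at $t=\log(1+\eps)$ gives the classical bound
\[
\pr(X\ge (1+\eps)\mu) \le \exp\!\left(-\mu\bigl[(1+\eps)\log(1+\eps)-\eps\bigr]\right).
\]
For the lower tail, I would run the same argument with $t<0$ (equivalently, apply the upper-tail argument to $-X$ after a shift), producing
\[
\pr(X\le (1-\eps)\mu)\le \exp\!\left(-\mu\bigl[(1-\eps)\log(1-\eps)+\eps\bigr]\right).
\]

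To reduce both bounds to the stated form $e^{-\eps^2\mu/3}$ and then combine them with a union bound, I would verify the two elementary analytic facts
\[
(1+\eps)\log(1+\eps)-\eps \;\ge\; \eps^2/3 \quad \text{for } 0<\eps<3/2,
\]
and
\[
(1-\eps)\log(1-\eps)+\eps \;\ge\; \eps^2/2 \;\ge\; \eps^2/3 \quad \text{for } 0<\eps<1.
\]
Each is proved by setting $g(\eps)$ equal to the left side minus the right side, checking $g(0)=g'(0)=0$, and showing $g''(\eps)\ge 0$ on the required interval; for the upper-tail inequality one computes $g''(\eps)=\tfrac{1}{1+\eps}-\tfrac{2}{3}$, which is nonnegative exactly on $(0,1/2]$ and requires one extra step (monotonicity of $g'$ past $\eps=1/2$ together with the value at $\eps=3/2$) to reach the full range $0<\eps<3/2$.

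The only real obstacle is verifying these two single-variable inequalities on the correct $\eps$-ranges; the probabilistic part is completely routine. Finally, the binomial case is immediate since $\bin(n,p)$ is a sum of $n$ i.i.d.\ Bernoulli$(p)$ variables, so the same argument applies verbatim.
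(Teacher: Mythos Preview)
Your proposal is correct and is the standard MGF-plus-Markov argument. The paper itself does not prove this statement at all: it is stated as a quoted tool with a reference to \cite[Chapter 2]{JLR} (Janson--\L{}uczak--Ruci\'nski), and the proof there is precisely the exponential-moment approach you outline. So there is nothing to compare; you have supplied exactly the standard derivation the paper defers to the literature.

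One tiny gap worth closing explicitly: your lower-tail analytic inequality is only stated for $0<\eps<1$, while the theorem allows $0<\eps<3/2$. For $1\le\eps<3/2$ the lower tail $\pr(X\le(1-\eps)\mu)$ is either $\pr(X=0)\le\prod_i(1-p_i)\le e^{-\mu}\le e^{-\eps^2\mu/3}$ (when $\eps=1$) or zero (when $\eps>1$), so the bound is trivial there; just say so.
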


\begin{proof}[Proof of Lemma~\ref{thm: alpha and gamma}]
Let $A$ and $B(v)$ be as defined above.
We start with $G$ and expose the edges of $G_{n,k}$, one vertex at a time in some particular order, and add them to the current graph.
So we have $G_0=G$ and $G_i=G_{i-1}\cup E_i$ for $i\ge 1$, where $E_i$ is the set of edges exposed in the $i$-th step.
In this process, in step $i$, we expose the edges emanating from the largest unexposed element of $A(G_{i-1})$. (If no such vertex exists, then $G_{i-1}$ has a perfect matching and we are done.)
Let $v_1$ be the largest element of $A(G_0)$. 
Hence $E_1$ is the set of edges emanating from $v_1$. 
If one of these edges joins $v_1$ to a vertex in $B(v_1)$, we can improve the maximum matching size by 1 by adding this new edge to a maximum matching that does not contain $v_1$.
In any case we update $A$ according to $G_1$. In the next step, we expose the edges emanating from the largest unused element of $A(G_1)$, and so on.

Note that, since $G \subseteq G_i$, every subset of $G_i$ of size at most $\a n$ expands.
Suppose $G_i$ does not have a perfect matching. 
Then $|B(v_i)|= |B_{G_i}(v_i)|> \a n$, where $v_i$ denotes the vertex exposed in the $i$-th step, and at most $i-1$ of the vertices in $B(v_i)$ is larger that $v_i$. 
Hence the probability of success, that is, the probability of extending the maximum matching size is at least 
\beq\label{MSP}
1-\lp 1-\frac{\a n -(i-1)}{n} \rp^k
\eeq
in step $i$. If $G_{\a n}$ does not have a perfect matching, then the number of successes is smaller than $\gamma n/2$ by the time first $\a n$ vertices are exposed.
On the other hand, this probability is bounded above by 
\[
\pr(Y_1+\cdots+Y_{\a n} <\gamma n/2),
\]
where $Y_i$ is a Bernoulli random variable with parameter $1-(1-(\a n-(i-1))/n)^k$ (independent of all the others).
Since
\[
\sum \mean[Y_i] \sim n \int_{0}^\a 1-(1-(\a-x))^k dx = \lp \a -\frac{1}{k+1}+\frac{(1-\a)^{k+1}}{1+k}\rp n=\zeta n,
\]
the Chernoff bound~\eqref{Chernoff} gives $Y_1+\cdots+Y_{\a n}\sim \zeta n$ whp, which finishes the proof.
\end{proof}




We are now ready to prove our main theorems about matchings. Recall that Theorem~\ref{thm:k=4} states that only a bounded number of vertices are isolated in $\ug[n,4]$ whp, and Theorem~\ref{thm:k=5} states that $\ug[n,5]$ has a perfect matching whp.

\begin{proof}[{\bf Proof of Theorem~\ref{thm:k=4}}]
Let $\omega$ be an integer tending to infinity slowly. We want to show that a maximum matching in $\ug[n,4]$ isolates fewer than $\omega$ vertices whp.

In order to get $\ug[n,4]$, we reveal the vertices and their choices one at a time.
Let $G_t$ denote the graph after $t$ vertices are revealed. (So in $G_t$ we only see $4(t-1)$ edges.)
Let  $\kappa_t$ denote the number of vertices that are isolated by a maximum matching in $G_t$.
We need to show
\beq\label{kappa is not large}
\pr(\k_n\ge \omega) \to 0
\eeq
as $n\to \infty$.

Let $t_0=\lfloor \sqrt{n}\rfloor$ and $\a=0.172$, which is smaller than $\a_1(4)$ by~\eqref{a1 values}.
For $t_0\le t\le n$, let ${\cal E}_t$ be the event that $G_t$ is an $(\a,1)$-expander. 
Using Corollary~\ref{a1,a2 expanders} with $k=4$ gives that each $G_t$ is an $(\a,1)$-expander with  probability $1-O(t^{-5})$. 
Hence, defining the event ${\cal E}:=\cap_{t_0\le t\le n} {\cal E}_t$, we have
\[
\pr(\cal E) \ge 1-\sum_{t=t_0}^n \pr(\cal E_t^c) = 1- \sum_{t=t_0}^n O\left(t^{-5}\right)= 1-O\left(n^{-2}\right).
\]
From now on, we will condition on $\cal E$.

Let $A_t$ denote the set of vertices that are isolated by at least one maximum matching in $G_t$.
(Hence $\k_t=0$  if and only if  $A_t=\emptyset$.)
Note that
\[
\k_{t+1}=
\begin{cases}
\k_t-1 & \text{ if $t+1$ chooses a vertex from $A_t$,}\\
\k_t+1 & \text{ otherwise.}
\end{cases}
\]
By Lemma~\ref{B and N(B)}, for any $v\in A_t$, the set $B(v)$ does not expand, that is, $|N(B(v))|<|B(v)|$.
Consequently, $|B(v)|$, and hence $A_t$, has size at least $\a t$ for $t\ge t_0$.
Thus, for $t\ge t_0$,
\beq\label{kappa changes}
\begin{aligned}
\pr(\kappa_{t+1}=\kappa_t+1 | \k_t\not=0)& \le (1-0.172)^4< 0.48\\
\pr(\kappa_{t+1}=\kappa_t-1 | \k_t\not=0) &= 1-\pr(\kappa_{t+1}=\kappa_t+1| \k_t\not=0) \ge 1-(1-0.172)^4 > 0.52.
\end{aligned}
\eeq
A crucial point is that $\k_t$ tends to decrease as long as  the maximum matching is not a perfect matching and we want to use this to prove~\eqref{kappa is not large}.

For $t\ge t_0$, let $\xi_{t}$ be the indicator of the event $\{\k_{t-1}\not=0, \ \k_{t}=\k_{t-1}+1\}$. 
So $\xi_t$ gets the value 1 when vertex $t$ has a chance to improve the maximum matching size but fails to do so.
For $t\ge t_0$, by~\eqref{kappa changes}, the sum $\xi_{t+1}+\cdots+\xi_n$ is stochastically dominated by a binomial random variable $Z_t$ with parameters $n-t$ and $0.48$.
Let $\cal T$ be the set of times $t$ at which a perfect matching occurs, that is,
\[
\Cal T= \{t \in [t_0,n]: \k_t=0\}.
\]
Trivially,
\beq\label{kappa two terms}
\pr(\k_n\ge \omega) \le \pr(\mathcal T=\emptyset)+  \pr(\k_n\ge \omega,\ \mathcal T\not=\emptyset).
\eeq
A requirement for $\cal T=\emptyset$ is 
\[
\sum_{t=t_0+1}^n\xi_t > \frac{n}{2}-t_0.
\]
Hence, by the Chernoff bound in~\eqref{Chernoff} (used only for the last inequality below),
\beq\label{term 1}
\pr(\mathcal T=\emptyset) \le \pr\left(\sum_{t=t_0+1}^n\xi_t > \frac{n}{2}-t_0\right) \le \pr( Z_{t_0} > n/2-t_0) \le e^{-\Omega(n)}.
\eeq
Now let us bound $ \pr(\k_n\ge \omega,\ \mathcal T\not=\emptyset)$.
When $\cal T\not=\emptyset$, let $t_f:=\max\mathcal T$. In this case, since $\k_n\le n-t_f$,
\begin{align*}
\pr(\k_n\ge \omega,\ \mathcal T\not=\emptyset) 
&= \pr(\k_n\ge \omega,\  \mathcal T\not=\emptyset,\ t_f>n-\omega)+ \pr(\k_n\ge \omega,\ \mathcal T\not=\emptyset,\ t_f \le n-\omega)\\
&=\pr(\k_n\ge \omega,\  \mathcal T\not=\emptyset,\ t_f\le n-\omega)	)	\\	
&=\sum_{t=t_0}^{n-\omega} \pr(\k_n\ge \omega,\ \mathcal T\not=\emptyset,\ t_f= t)
\end{align*}
Finally, the event $\{\k_n\ge \omega,\ \mathcal T\not=\emptyset,\ t_f= t\}$ is a subevent of
\[
1+\sum_{t'=t+1}^n\xi_t > \frac{n-t+\omega}2,
\]
from which we get,
\[
\pr(\k_n\ge \omega,\ \mathcal T\not=\emptyset,\ t_f= t) \le \pr\left( \sum_{t'=t+1}^n\xi_t > \frac{n-t}2\right)\le \pr\left(Z_t>\frac{n-t}{2}\right)\le e^{-\Omega(n-t)},
\]
where the last inequality follows from the Chernoff bound~\eqref{Chernoff}.
Hence
\beq\label{term 2}
\pr(\k_n\ge \omega,\ \mathcal T\not=\emptyset) = \sum_{t=t_0}^{n-\omega} \pr(\k_n\ge \omega,\ \mathcal T\not=\emptyset,\ t_f= t) \le \sum_{t=t_0}^{n-\omega}e^{-\Omega(n-t)}\to 0.
\eeq
Using~\eqref{term 1} and \eqref{term 2} in~\eqref{kappa two terms} gives~\eqref{kappa is not large}, which finishes the proof.
\end{proof}

\begin{proof}[{\bf Proof of Theorem~\ref{thm:k=5}}]
We view $\ug[n,5]$ as the union of $G=\ug[n,4]$ and $\ug[n,1]$. 
By Theorem~\ref{thm:k=4}, almost all the vertices of $G$ are covered by a maximum matching. 
Also, for $\a=0.172$, every set of size at most $\a n$ expands in $G=\ug[n,4]$ by~\eqref{a1 values}. 
We apply Lemma~\ref{thm: alpha and gamma} with $k=1$, $\a=0.172$, and a sufficiently small $\ga>0$ to finish the proof. 
(Note that $\zeta=\a- \frac{1}{2}+ \frac{(1-\a^2)}{2}>0$.)
\end{proof}

\section{Hamilton cycles}\label{sec:cycles}

The proofs in this section are analogous to the proofs in the previous section and also follow closely the proofs of the similar results in~\cite{FGPR}. We find a Hamilton cycle again in two stages. In the first stage, for some $k_1<k$, we reveal the vertices (and their $k_1$ choices) one at a time, and find a long path in $\ug[n,k_1]$. In the second stage, we reveal the remaining $k-k_1$ choices of each vertex in some particular order and complete the long path to a Hamilton cycle in $\ug[n,k]$. 
In each stage, the key tool we need is that $\ug[n,k]$ is an $(\a,2)$-expander for every constant $\a<\a_2(k)$.
This fact provides us with many nonedges whose additions to the graph would increase the length of the longest path.
Some approximate values (lower bounds) of $\a_2(k)$'s for small $k$ are given in~\eqref{a2 values}.

Given a longest path $P=x_0x_1\dots x_t$ in a graph $G$, if $x_i \in P$ is a neighbor of $x_t$ , then $P'=x_0\dots x_{i}x_tx_{t-1}\dots x_{i+1}$  is another longest path. This transformation from $P$ to $P'$, which was introduced by P\'osa~\cite{Posa}, is called a rotation. The set $\End(P,x_0)$ is defined as the set of vertices $x$ such that $x_0$ and $x$ are connected through a path $Q$ that is obtained from $P$ through a sequence of rotations. (Vertex $x_0$ stays as an endpoint in all the rotations.) The collection of these sets is crucial for our proofs. 

Analogous to Lemma~\ref{B and N(B)}, we have the following lemma. (See \cite[Corollary 6.7]{FK} for a proof.)

\begin{lemma}\label{N<2X}
Let $G = (V,E)$ be a graph, $P$ be any longest path of $G$, and $a$ one of its endpoints. Then, $|N(\End(P, a))| < 2|\End(P, a)|$.
\end{lemma}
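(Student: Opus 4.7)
The plan is to follow P\'osa's classical rotation-extension argument. Fix a longest path $P = v_0 v_1 \cdots v_t$ with $v_0 = a$ and set $S := \End(P, a)$. By definition, for every $x \in S$ there is a longest path $Q_x$ from $a$ to $x$ obtained from $P$ by a sequence of rotations; each rotation preserves the vertex set of the path, so $V(Q_x) = V(P)$ for every $x \in S$.

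First I would record two structural facts. \emph{Neighbours lie on $V(P)$:} if some $x \in S$ had a neighbour outside $V(Q_x)$, then $Q_x$ could be extended by one edge, contradicting that $P$ is a longest path; hence $N(S) \subseteq V(P)$. \emph{Rotation closure:} writing $Q_x = w_0^x w_1^x \cdots w_t^x$, if $w_i^x$ is adjacent to $x$ in $G$ with $i \le t-2$, then one additional rotation at $w_i^x$ produces a longest path ending at $w_{i+1}^x$, so $w_{i+1}^x \in S$. Consequently, the map $\phi_x \colon N(x) \setminus \{w_{t-1}^x\} \to S$ sending $w_i^x \mapsto w_{i+1}^x$ is an injection into $S$.

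The final step is to package these local injections into the global bound $|N(S)| < 2|S|$. I would partition $N(S) = T_1 \cup T_2$, where $T_1 = \{w_{t-1}^x : x \in S\} \cap N(S)$ collects the $Q_x$-predecessors of endpoints (so $|T_1| \le |S|$, since each $x \in S$ contributes at most one such vertex), and $T_2 = N(S) \setminus T_1$. For each $y \in T_2$, every representation $y = w_i^x$ forces $i \le t-2$, so the shifted witness $w_{i+1}^x$ lies in $S$; a careful double counting of the pairs $(x,y)$ with $y \in N(x) \cap T_2$ against these shifted witnesses then yields $|T_2| < |S|$, and the two bounds together give $|N(S)| < 2|S|$.

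The main obstacle will be this last accounting step. Different endpoints $x \in S$ come with different rotated paths $Q_x$, so the ``successor'' of a fixed $y \in V(P)$ depends on which $Q_x$ one chooses, and one must be careful that a single $y \in T_2$ is not double-counted as a witness to two elements of $S$. A clean way around this is to organise the rotations into the P\'osa rotation tree rooted at $P$ and argue inductively on its depth that each newly created endpoint is responsible for at most one new non-predecessor vertex of $N(S)$; this inductive book-keeping is the content of \cite[Corollary 6.7]{FK}, and reproducing it faithfully is the main technical content of the proof.
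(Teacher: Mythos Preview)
The paper does not give a proof of this lemma at all; it simply cites \cite[Corollary~6.7]{FK}. Since your proposal also ultimately defers to that same reference, at the formal level the two agree.

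Your sketch, however, heads down a harder road and the step you flag as ``the main obstacle'' is a genuine gap, not just bookkeeping. Partitioning $N(S)$ according to the $Q_x$-predecessors $w_{t-1}^x$ depends on a non-canonical choice of one rotated path $Q_x$ per endpoint $x$, and the map $y\mapsto w_{i+1}^x$ you propose for $T_2$ is not injective: distinct vertices $y,y'\in T_2$ can land on the same witness in $S$, so no double-count of the kind you describe yields $|T_2|<|S|$. The rotation-tree accounting you mention is also not how the argument in \cite{FK} runs.

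The classical proof avoids all of this by working only with the \emph{original} path $P=v_0v_1\cdots v_t$. The key inductive observation is that whenever a rotation breaks a path-edge, one of its two endpoints becomes the new terminal vertex and hence lies in $S$; consequently, if $v_j\notin S$ and also $v_{j-1},v_{j+1}\notin S$, then in \emph{every} rotated path the path-neighbours of $v_j$ are still $v_{j-1}$ and $v_{j+1}$. But then, were such a $v_j$ adjacent to some $x\in S$, a single further rotation of $Q_x$ at $v_j$ would place one of $v_{j-1},v_{j+1}$ into $S$, a contradiction. Hence $N(S)\subseteq S^-\cup S^+$, the $P$-predecessors and $P$-successors of $S$. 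Since $v_t\in S$ has no $P$-successor, $|S^+|\le |S|-1$, giving $|N(S)|\le |S|+(|S|-1)<2|S|$. This is the argument you should reproduce; no per-endpoint paths $Q_x$ or rotation tree are needed.
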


The next lemma is analogous to Lemma~\ref{thm: alpha and gamma}.

\begin{lemma} \label{thm: alpha and lambda}
Let $\lambda$ and $\alpha$ be two positive constants.
Suppose $G$ is a graph on the vertex set $[n]$ that has a path of length at least $\lambda n$ vertices, where $n\to \infty$.
Suppose also that $G$ is an $(\a,2)$-expander.
Let $k$ be a positive  integer such that
\[
\zeta:=\alpha- \frac{1}{k+1}+\frac{(1-\alpha)^{k+1}}{k+1}.
\]
\begin{enumerate}
\item[(i)] If $\zeta> 1-\lambda$, then $G \cup \ug$ has a Hamilton cycle whp.
\item[(ii)]If $\zeta< 1-\lambda$, then $G \cup \ug$ has a path of length at least $(1-o(1))(\lambda+\zeta)$ whp.
\end{enumerate}
\end{lemma}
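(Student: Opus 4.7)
The plan is to mimic the proof of Lemma~\ref{thm: alpha and gamma}, replacing maximum matchings by longest paths, Lemma~\ref{B and N(B)} by Lemma~\ref{N<2X}, and the $(\alpha,1)$-expander property by the $(\alpha,2)$-expander property. Set $G_0 := G$ (which contains a path $P_0$ of length at least $\lambda n$) and expose the edges of $G_{n,k}$ one vertex at a time: in step $i$, pick $v_i$ to be the largest unexposed off-path vertex of $G_{i-1}$, expose its $k$ uniform choices from $[v_i-1]$, and set $G_i := G_{i-1} \cup E_i$. Whenever the current longest path $P_{i-1}$ is not Hamilton, fix one of its endpoints $a$ and let $S_i := \End(P_{i-1},a)$. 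Since $G \subseteq G_{i-1}$, the graph $G_{i-1}$ remains an $(\alpha,2)$-expander, so Lemma~\ref{N<2X} forces $|S_i| > \alpha n$. If any of $v_i$'s choices lands in $S_i \cap [v_i-1]$, a P\'osa rotation of $P_{i-1}$ produces a longest path from $a$ to some $x \in S_i$, and appending $v_i$ through the new edge $v_ix$ extends the path by one vertex.

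Run the algorithm for $M := \lceil \alpha n \rceil$ steps (or until a Hamilton path appears). Mirroring the bookkeeping of the matching proof, the success probability at step $i$ is at least $1-(1-(\alpha n-(i-1))/n)^k$, and with $Y_i$ indicating success in step $i$,
\[
\sum_{i=1}^{M}\mean[Y_i] \sim n\int_0^\alpha \lp 1-(1-\alpha+x)^k \rp dx = \zeta n.
\]
This is the exact integral already evaluated in the proof of Lemma~\ref{thm: alpha and gamma}; the Chernoff bound \eqref{Chernoff} then gives $\sum Y_i = (1+o(1))\zeta n$ whp. Since each success lengthens the path by one vertex, the longest path in $G_M$ has length at least $\lambda n + (1+o(1))\zeta n$ whp. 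Part (ii) is immediate: for $\zeta < 1-\lambda$, this is $(1+o(1))(\lambda+\zeta)n$.

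For part (i), where $\zeta > 1-\lambda$, the success count exceeds the deficit $(1-\lambda)n$, so a Hamilton path $P$ appears at some step $i^* < M$ whp. To close $P$ into a Hamilton cycle, I would reserve in advance a thin independent copy $G_{n,k_0}$ by writing $G_{n,k} = G_{n,k_0} \cup G_{n,k-k_0}$, and then apply one more rotation: if no Hamilton cycle exists, then $|\End(P,a)| > \alpha n$ for any endpoint $a$ of $P$ by Lemma~\ref{N<2X}, and a probability estimate of the same flavor as above — now applied to the reserved $G_{n,k_0}$ — yields an edge from $a$ to $\End(P,a)$ whp. The main obstacle I anticipate lies in the bookkeeping of the body of the algorithm: $S_i \subseteq V(P_{i-1})$ consists of on-path vertices while the exposed $v_1,\dots,v_{i-1}$ are all off-path, so the clean bound $|S_i \cap [v_i-1]| \geq |S_i|-(i-1)$ behind the matching proof does not transfer verbatim. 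Care is needed (as in \cite{FGPR}) in ordering the $v_i$'s so that few elements of $S_i$ lie above $v_i$, and the closing step additionally requires the pre-allocated independent randomness reserved above.
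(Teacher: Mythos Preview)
Your proposal has a genuine gap that you yourself flag in the last paragraph but do not resolve. Choosing $v_i$ to be the largest unexposed \emph{off-path} vertex breaks the key bookkeeping: the endpoint set $S_i=\End(P_{i-1},a)$ lies entirely on $P_{i-1}$, so there is no reason for $|S_i\cap[v_i-1]|\ge |S_i|-(i-1)$. If, for instance, the current longest path happens to occupy the top $\lambda n$ labels of $[n]$, then every element of $S_i$ exceeds $v_i$ and your success probability is zero at every step. No reordering of off-path vertices can repair this, so the appeal to \cite{FGPR} is not a fix; the lower bound on the success probability, and hence the whole Chernoff computation, is unsupported.

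The paper sidesteps the difficulty by exposing, at step $i$, the largest unexposed vertex $w_i$ in the set $A(G_{i-1})$ of \emph{all endpoints of longest paths}, and aiming not at $\End(P,a)$ for an off-path $a$ but at $B(w_i)$, the set of other endpoints of longest paths through $w_i$. Since $B(w_i)\subseteq A(G_{i-1})$, any element of $B(w_i)$ above $w_i$ has already been exposed, and there are at most $i-1$ of those --- exactly the matching-proof count, so the bound $1-(1-(\alpha n-(i-1))/n)^k$ goes through verbatim. When $w_i$ hits $B(w_i)$ one obtains an edge joining the two endpoints of some longest path; the elementary claim that in a connected non-Hamiltonian graph such an edge yields either a Hamilton cycle or a strictly longer path (close the path into a cycle, then leave the cycle along any outgoing edge) finishes the step. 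This also renders your reserved copy $G_{n,k_0}$ unnecessary: path extension and cycle closing are handled by one and the same mechanism.
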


\begin{proof}
The proof is very similar to the proof of Lemma~\ref{thm: alpha and gamma} and we omit some of the details. 
We will use the following standard argument in the proof of the lemma. 
\begin{claim}
Let $Q$ be a longest path with endpoints $a$ and $b$ in a connected non-Hamiltonian graph $H$. Then the graph $H\cup \{ab\}$ is either Hamiltonian or has a path longer than $Q$.
\end{claim}

\begin{proof}[Proof of the claim]
The edge $ab$ together with $Q$ forms a cycle $C$. If $C$ is not a Hamilton cycle, then, since $G$ is connected, there must be an edge $cd$ in $H$ such that $c\in C$ and $d\not \in C$. In that case, $C \cup \{cd\} \setminus \{cc'\}$, where $cc'$ is an edge in $C$, is a path longer than $Q$. 
\end{proof}

It follows immmediately from the claim that if $H$ is a connected non-Hamiltonian graph and $P$ is a longest path with one endpoint $a$, then adding an edge between $a$ and $\End(P,a)$ either makes the graph Hamiltonian or gives a path longer than $P$. In our case, since $G$ given in the lemma is connected and an $(\a,2)$-expander, by Lemma~\ref{N<2X}, $\End(P,a)$ is large for every pair $(P,a)$.
Now we describe how we use this fact to improve the longest path sufficiently many times so that the statement of the lemma holds.

We start with the definitions of two sets $A$ and $B$ analogous to the ones in the previous section.
For a given graph $H$, let
\[
A=A(H):=\{v\in V(H)\, :\, v \text{ is an endpoint of some longest path in $H$}\},
\]
and for $v\in A$,
\[
B(v):=\{w\in V(H)\setminus \{v\} \,:\, w \text{ is the other endpoint of a longest path in $H$ that starts at $v$}\}.
\]
Let $G_0=G$. For $i\ge 0$, recursively, we obtain $G_{i+1}$ from $G_i$ by exposing a particular vertex in $\gnk$. 
Specifically, denoting the  largest unexposed vertex in $A(G_i)$ by $w_i$, we let $G_{i+1}:=G_i\cup E_i$, where $E_i$ is the set of edges in $\gnk$ that connects $w_i$ with older vertices $\{1,\dots,w_i-1\}$.

Now, by the previous discussion, if $w_i$ chooses a vertex from $B(w_i)=B_{G_i}(w_i)$, then a longest path is improved (or a Hamilton cycle is obtained).
For every $i$, since $G_0\subseteq G_i$ and $G=G_0$ is an $(\a,2)$-expander, $G_i$ is also an $(\a,2)$ expander. Hence, by Lemma~\ref{N<2X}, $|B(w_i)|\ge \a n$ for every $i$.
Hence, the probability that $w_i$ chooses at least one vertex from $B(w_i)$ in $\gnk$, or equivalently, the probability of extending the longest path, is
\[
1-\brac{1-\bfrac{B(w_i)-i}{n}}^k \ge 1-\brac{1-\bfrac{\a n-i}{n}}^k.
\]
The rest of the proof is similar to what follows Equation~\eqref{MSP} and we will not repeat it.
\end{proof}

We now prove Theorem~\ref{thm:k=13} which tells us that $\ug[n,13]$ is Hamiltonian.

\begin{proof}[{\bf Proof of Theorem~\ref{thm:k=13}}]
Let $H_0=\ug[n,10]$, and $H_i$ be the graph obtained from $H_{i-1}$ by adding an independent copy of $\ug[n,1]$.
Hence, $H_i$ has the same distribution as $\ug[n,10+i]$.
We will first show that $H_0$ has a long path and then use Lemma~\ref{thm: alpha and lambda} repeatedly to show that $H_3$ is Hamiltonian. 

\begin{claim}
Whp, $H_0$ has a path of length at least $(0.9177)n$.
\end{claim}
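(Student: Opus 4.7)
My proof plan is to adapt the online (sequential-exposure) argument used in the proof of Theorem~\ref{thm:k=4} to the path-extension setting. Expose the vertices of $H_0=\ug[n,10]$ one at a time; write $G_t$ for the graph after the first $t$ vertices have been exposed and $L_t$ for the number of vertices in a longest path of $G_t$. The goal is to show that $L_n\ge 0.9177\,n$ whp.

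First I would fix an $\alpha\in(0.221,\alpha_2(10))$, nonempty by~\eqref{a2 values}, and set $t_0:=\lfloor\sqrt n\rfloor$. Applying Corollary~\ref{a1,a2 expanders}(ii) with $k=10$ to $G_t=\ug[t,10]$, the probability that $G_t$ fails to be an $(\alpha,2)$-expander is $O(t^{-8})$. A union bound over $t\in[t_0,n]$ therefore produces an event $\mathcal E$ with $\pr(\mathcal E)=1-o(1)$ on which every $G_t$ with $t\ge t_0$ is an $(\alpha,2)$-expander, and I would condition on $\mathcal E$ throughout.

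Next, for each $t>t_0$, take any longest path $P_{t-1}$ of $G_{t-1}$ and one of its endpoints $a$. Combining Lemma~\ref{N<2X} (which gives $|N(\End(P_{t-1},a))|<2|\End(P_{t-1},a)|$) with the $(\alpha,2)$-expansion of $G_{t-1}$ forces $|\End(P_{t-1},a)|\ge\alpha(t-1)$, regardless of whether $P_{t-1}$ is already a Hamilton path of $G_{t-1}$. Since vertex $t$ makes $10$ independent uniform choices in $[t-1]$, the probability that at least one of them lies in $\End(P_{t-1},a)$ is at least $1-(1-\alpha)^{10}$; whenever this occurs, a sequence of P\'osa rotations followed by the new edge produces a path of length $L_{t-1}+1$ in $G_t$. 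Let $\xi_t$ be the indicator of this extension. Then, on $\mathcal E$ and conditional on the history up to step $t-1$, $\xi_t$ stochastically dominates a Bernoulli($p$) variable with
\[
p:=1-(1-\alpha)^{10}>1-(1-0.221)^{10}=0.9177\ldots,
\]
by the choice of $\alpha$; this is the calculation that pins down the constant $0.9177$ in the statement.

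Finally, since $L_n\ge L_{t_0}+\sum_{t=t_0+1}^n\xi_t$ and the $\xi_t$'s dominate an i.i.d.\ Bernoulli$(p)$ sequence, the Chernoff bound~\eqref{Chernoff} gives $\sum_{t>t_0}\xi_t\ge(1-o(1))p(n-t_0)\ge 0.9177\,n$ whp, completing the proof. The main (minor) obstacle is justifying the stochastic domination despite the fact that $P_{t-1}$ and $a$ are chosen adaptively: but only the uniform lower bound $p$ on the conditional extension probability is needed, and a standard coupling compares $(\xi_t)_{t>t_0}$ to an i.i.d.\ Bernoulli$(p)$ sequence, after which the Chernoff estimate applies routinely.
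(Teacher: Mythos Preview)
Your proposal is correct and follows essentially the same approach as the paper's proof: sequentially expose the vertices of $\ug[n,10]$, use Corollary~\ref{a1,a2 expanders}(ii) together with a union bound over $t\ge t_0=\lfloor\sqrt n\rfloor$ to guarantee the $(\alpha,2)$-expansion of every $G_t$, combine this with Lemma~\ref{N<2X} to force $|\End(P_{t-1},a)|\ge\alpha(t-1)$, and then observe that each new vertex extends the current longest path with probability at least $1-(1-\alpha)^{10}>0.9177$. The only cosmetic differences are that you choose $\alpha$ strictly above $0.221$ (rather than equal to it) and invoke the Chernoff bound plus an explicit coupling to i.i.d.\ Bernoulli variables, whereas the paper simply cites the law of large numbers; your treatment of the stochastic domination is in fact a bit more careful than the paper's.
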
 

\begin{proof}[Proof of the claim]
As the proof is very similar to the proof of Theorem~\ref{thm:k=4}, we will leave some details out.
We reveal the vertices and their $10$ choices one at a time, and we find a path of desired length.
For each $t$, after step $t$, we pick a longest path $P_t$ and one of its endpoints $a_t$ (with any deterministic rule or randomly). 
By Lemma~\ref{N<2X}, we have $|N(\End(P_t, a_t))| < 2|\End(P_t, a_t)|$.
On the other hand, by Corollary~\ref{a1,a2 expanders} and Equation~\eqref{a2 values}, $G_t$ is an $(0.221,2)$-expander with probability $1-O(t^{-8})$.
Hence, for $t_0:=\floor{\sqrt n}$,
\[
\mathcal Q_t:= \big\{|\End(P_t, a_t)|\ge (0.221) t\big\},
\]
and $\mathcal Q:=\cap_{t\ge t_0} \ \mathcal Q_t$, we have 
\[
\pr(\mathcal Q) \ge 1-\sum_{t\ge t_0} O\left(t^{-8}\right)=1-O\left(t_0^{-7}\right)=1-O\left(n^{-7/2}\right).
\]
Let $\xi_t$ denote the indicator of the event $\{\text{vertex $t+1$ makes a choice from  } \End(P_t, a_t)\}$, which is contained in $\{|P_{t+1}|\ge 1+ |P_t|\}$.
As $n\to \infty$ and $t\ge t_0= \floor{\sqrt n}$,
\[
\mean[\xi_t] \ge \pr(\mathcal Q)\cdot\mean[\xi_t\, |\, \mathcal Q] \ge \left(1-O\left(n^{-7/2}\right)\right)\left(1- (1-0.221)^{10}\right)> 0.9177.
\]
Hence $\sum_{t=t_0}^n \xi_t$ stochastically dominates the sum of $n-t_0+1$ indepedent identical Bernoulli random variables with mean $0.9177+\eps$ for some small but fixed  $\eps>0$. Hence by the law of large numbers,
\[
\sum_{t=t_0}^n \xi_t > (0.9177)n.
\]
This finishes the proof of the claim.
\end{proof}
Note that $H_i$ is an $(\a,2)$-expander whp for any $\a<\a_2(i+10)$, where 
\[
\a_2(10)>0.221, \quad \a_2(11)>0.235, \quad \a_2(12)>0.247
\]
as given in~\eqref{a2 values}.
In particular, whp, $H_0$, $H_1$, and $H_2$ are $(0.221,2)$, $(0.235,2)$, and $(0.247,2)$-expanders, respectively.
Using Lemma~\ref{thm: alpha and lambda} with $G=H_0$, $k=1$, $\la=0.9177$, and $\a=0.221$ gives the existence in $H_1$ of a path of length at least
\[
(1-o(1)) (0.9177+0.221^2/2)n > (0.9421)n
\]
whp. Similarly, whp, $H_2$ has a path of length at least
\[
(1-o(1)) (0.9421+0.235^2/2)n > (0.9697)n.
\]
Now since $\a_2(12)^2/2>(0.247)^2/2>0.0305>1-0.9697$, again by Lemma~\ref{thm: alpha and lambda}, $H_3$ has a Hamilton cycle whp.
\end{proof}

\section*{Acknowledgement}

The author is grateful to Boris Pittel for his valuable suggestions.

\end{document}